\def\R{{\mathbb R}}
\def\N{{\mathbb N}}
\def\B{{\mathscr B}}
\def\1{\mathbbm{1}}
\def\p{\textfrak{p}}
\newtheorem{theorem}{Theorem}[section]
\newtheorem{proposition}[theorem]{Proposition}
\newtheorem{lemma}[theorem]{Lemma}
\newtheorem{rmk}[theorem]{Remark}
\newtheorem{definition}[theorem]{Definition}
\newenvironment{proof}{{\sc Proof.}}{\hfill {\sc qed}}
\newenvironment{remark}{\begin{rmk}\rm}{\end{rmk}}
\newcommand\scal[2]{\langle #1,#2\rangle}
\newcommand{\res}{\mathop{\hbox{\vrule height 7pt width .5pt depth 0pt
\vrule height .5pt width 6pt depth 0pt}}\nolimits}
\title{On integration by parts formula on open convex sets in Wiener spaces}
\author{Davide Addona\thanks{email: davide.addona@unimib.it}}
\affil{Department of Mathematics and applications\\
University of Milano Bicocca\\
via Cozzi 55, 20125 Milano, Italy}
\author{Giorgio Menegatti\thanks{email:
giorgio.menegatti@unife.it} }  
\author{Michele Miranda jr.\thanks{email: michele.miranda@unife.it}}
\affil{Department of Mathematics and Computer Sciences\\
University of Ferrara\\
via Machiavelli 30, I-44121 Ferrara, Italy}
\providecommand{\keywords}[1]{{\textit{Keywords}:} #1}
\providecommand{\subjclass}[1]{{\textit{SubjClass}[2000]:} #1}
\begin{document}

\maketitle
\abstract{In Euclidean space, it is well known that any integration by parts formula for a set of finite perimeter $\Omega$
is expressed by the integration with respect to a measure $P(\Omega,\cdot)$ which is equivalent to the one-codimensional Hausdorff measure 
restricted to the reduced boundary of $\Omega$. The same result has been proved in an abstract Wiener space, typically an infinite dimensional space, where the surface measure considered is the one-codimensional spherical Hausdorff-Gauss measure $\mathscr S^{\infty-1}$ restricted to the measure-theoretic boundary of $\Omega$. In this paper we consider an open convex set $\Omega$ and we provide an explicit formula for the density of $P(\Omega,\cdot)$ with respect to $\mathscr S^{\infty-1}$. In particular, the density can be written in terms of the Minkowski functional $\p$ of $\Omega$ with respect to an inner point of $\Omega$. As a consequence, we obtain an integration by parts formula for open convex sets in Wiener spaces.}

\vspace{0.5cm}
\keywords{{
Infinite dimensional analysis; Wiener spaces; integration-by-parts formula; convex analysis; geometric measure theory}}\\

\subjclass{Primary: 46G02; Secondary: 28B02, 58E02}

\section{Introduction}
We consider a separable Banach space $X$ endowed with a Gaussian measure $\gamma$, whose Cameron-Martin space is denoted by $H$. The covariance operator is denoted by $Q:X^*\rightarrow X$, where $X^*$ is the topological dual of $X$, and $\Omega\subseteq X$ is an open and convex domain. The aim of this paper is proving an integration-by-parts formula for the domain $\Omega$. To be more precise, we are going to show that for any Lipschitz function $\psi:X\rightarrow\R$ it holds that
\begin{align}
\label{ansimare}
\int_\Omega \partial_k^* \psi d\gamma=\int_{\partial \Omega}\psi \frac{\partial_k\p}{|\nabla_H\p|_H}d\mathscr S^{\infty-1}, \qquad k\in\N.
\end{align}
Here, $\p$ is the Minkowski functional of $\Omega$ and $\mathscr S^{\infty-1}$ is the (spherical) Hausdorff-Gauss surface measure introduced in \cite{FD92}, where the surface measure is denoted by $\rho$. However, we use the notation $\mathscr S^{\infty-1}$ which has been introduced in \cite{AMP10} and is more familiar with the language of geometric measure theory. The measure $\rho$ is the generalization of the Airault-Malliavin surface measure \cite{AM88}.

The crucial tools to obtain formula \eqref{ansimare} are convex analysis and geometric measure theory in infinite dimension. 
The former ensures that the Minkowski functional $\p$ related to the open convex domain $\Omega$ satisfies regularity conditions. 
Indeed, it is well known that the Minkowski functional related to an open convex set is convex and continuous (see \cite{Phe93}) and therefore 
we infer that $\p$ is Lipschitz, and therefore  G\^ateaux differentiable almost everywhere. This allows us to write the exterior normal vector of $\Omega$ in terms of $\p$, 
as in finite dimensional setting.

Geometric measure theory has been recently developed, starting from the first definition of functions of bounded variation ($BV$ functions for short)
in abstract Wiener spaces (which we denote by $BV(X,\gamma)$) given by \cite{F00} and \cite{FH01}. However, the authors propose a stochastic approach, 
defining the sets of finite perimeter in terms of reflected Brownian motions and by using the theory of Dirichlet forms. 
In \cite{AmbMirManPal10} the authors prove the results of \cite{FH01} and further properties of $BV$ functions in abstract Wiener spaces in 
a purely analytic setting, closer to the classical one. In particular, they prove the equivalence between different definitions of $BV(X,\gamma)$ 
in terms of total variation $V_H(f)$ of a function $f$, by approximation with more regular functions throughout the functional $L_H(f)$ and 
by means of the Ornstein-Uhlenbeck semigroup $(T_t)_{t\geq0}$. The latter is the analogous in the Gaussian setting of the heat semigroup in 
the original definition of $BV$ functions given by De Giorgi in \cite{DG53}.

We recall the definition of the space $BV(X,\gamma)$ of the functions of bounded variation on $X$ (see e.g. \cite{FH01} and \cite[Definition 3.1]{AmbMirManPal10}). We say that $f\in L^1(\log L)^{1/2}(X,\gamma)$ is a function of bounded variation if there exists a finite signed Radon measure $\mu\in\mathscr M(X;H)$ such that for any $h\in QX^*$ it follows that
\begin{align*}
\int_Xf\partial_h^*\Psi d\gamma=-\int_X\Psi d[h,\mu]_H,
\end{align*}
for any $\Psi\in \mathcal{FC}_b^1(X)$. Further, if $U\subset X$ is a Borel set and $f=\1_U$, if $f$ has bounded variation then we say that $U$ has finite perimeter and we denote by $P(U,\cdot)$ the associated measure. The definition of $BV$ functions on an open set $A\subset X$ is more complicated, since of the lackness of local compactness in infinite dimension. However, $BV$ functions on open domains $A$ has been investigated in \cite{AdMeMi18}, where, as in \cite{AmbFusPal00}, the authors provide different characterizations of the space $BV(A,\gamma)$ by means of the total variation $V_\gamma(f,A)$ and in terms of approximations with more regular functions throughout the functional $L_\gamma(f,A)$. We stress that the characterization in terms of the Ornstein-Uhlenbeck semigroup of $BV(A,\gamma)$ is not an easy task since at the best of our knowledge there is no good definition of $(T_t)_{t\geq0}$ on a general open domain $A$. However, in \cite{Cappa} it has been defined the Ornstein-Uhlenbeck semigroup $(T^C_t)_{t\geq0}$ on the convex set $C\subset X$ by means of finite dimens$^{•}$ional approximations, and in \cite{LMP15} the authors relate the variation of a function $f$ with the behaviour of $T_t^Cu$ near $0$. 

Sets of finite perimeter play a crucial role in our investigation. As in the finite dimensional case, the measure associated to sets 
of finite perimeter is strictly connected with a surface measure.  In \cite{FD92} it is introduced a notion of surface measure in 
infinite dimension, the spherical Hausdorff-Gauss surface measure $\mathscr S^{\infty-1}$, which is defined by means of finite dimensional 
spherical Hausdorff measure 
$\mathscr S^{n-1}$, $n\in\N$. This is different from the classical Hausdorff measure $\mathscr H^{n-1}$ even if the relation 
$\mathscr H^{n-1}\leq \mathscr S^{n-1}\leq 2\mathscr H^{n-1}$ holds true and they coincide on rectifiable sets. This choice is due to the fact that 
spherical Hausdorff-Gauss surface measure $\mathscr S^{n-1}$ enjoy a monotonicity property (see  \cite[Lemma 3.2]{AMP10}, \cite[Proposition 6(ii)]{FD92} 
or \cite[Proposition 2.4]{HI10}) which allows to define measure $\mathscr S^{\infty-1}$ as limit on direct sets. Further details are given 
in Section \ref{crumiro}.

Properties of sets of finite perimeter have been widely studied in \cite{AMP10}, \cite{CLMN12} and \cite{HI10}. 
In particular, \cite[Theorem 5.2]{AMP10} and \cite[Theorem 2.11]{HI10} show that if $U$ has finite perimeter in $X$, 
then $P(U,B)=\mathscr S^{\infty-1}(B\cap\partial^* U)$, where $\partial ^*U$ is the cylindrical essential boundary introduced 
in \cite[Definition 2.9]{HI10}. 
It is worth noticing that in the infinite-dimensional setting things do not work as
well as for the Euclidean case; \cite{P81} gives an example of an infinite-dimensional
Hilbert space $X$, a Gaussian measure $\gamma$ and a set $E \subset X$ such that $0 < \gamma(E) < 1$ and
\begin{align}
\label{copiato}
\lim_{r\rightarrow0}\frac{\gamma(E\cap B_r(x))}{\gamma(B_r(x))}=1, \mbox{ for every } x\in X.
\end{align}
In the same work, it is also shown that if the eigenvalues of the covariance $Q$ decay to
zero sufficiently fast, then it is possible to talk about density points; in some sense,
the requirement on the decay gives properties of $X$ closer to the finite-dimensional
case. For these reasons, in general the notion of point of density as given in \eqref{copiato} is
not a good notion.
However, \cite{AF11} gives a definition of points of density $1/2$ by means of the Ornstein-Uhlenbeck semigroup $(T_t)_{t\geq0}$.

The properties of $\Omega$ give other important consequences. At first, we show that, as in finite dimension, for any open convex set $C\subset X$ we have $\partial C=\partial ^*C$, where $\partial C$ denotes the topological boundary of $C$. Further, from \cite[Proposition 9]{CLMN12}, it follows that $\Omega$ has finite perimeter and therefore from the above reasoning it follows that $P(\Omega,B)=\mathscr S^{\infty-1}(B\cap \partial^*\Omega)=\mathscr S^{\infty-1}(B\cap\partial \Omega)$. This explain why in the right-hand side of \eqref{ansimare} the measure $\mathscr S^{\infty-1}\res\partial\Omega$ appears.

Finally, we stress that \eqref{ansimare} is the generalization of the integration-by-parts formula proved in \cite{CL14}. Here, the authors deal with subsets of $X$ of the type $\mathcal O:=\{x\in X:G(x)<0\}$, where $G:X\rightarrow \R$ is a suitable regular function which satisfy a sort of nondegeneracy assumption, and they prove that
\begin{align}
\label{sospiro}
\int_{\mathcal O}\partial_k^*\varphi d\gamma=\int_{G^{-1}(0)}\varphi \frac{\partial_kG}{|\nabla_HG|_H}\varphi d\mathscr S^{\infty-1}, \qquad k\in\N,
\end{align} 
for any Lipschitz function $\varphi:X\rightarrow\R$.  $G^{-1}(0)$ coincides $\mathscr S^{\infty-1}$-almost everywhere with 
$\partial\mathcal O$. Thanks to \eqref{sospiro}, 
the authors set the bases of a theory of the traces for Sobolev functions in abstract Wiener spaces and proved the existence of a trace operator 
${\rm Tr}$. However, this theory if far away to be complete. Indeed, in general if $f$ belongs to the Sobolev space $W^{1,p}(\mathcal O,\gamma)$ 
with $p\in(1,+\infty)$ (see \cite{CL14} for the definition of $W^{1,p}(\mathcal O,\gamma)$), then ${\rm Tr}f\in L^q(\partial \mathcal O,\rho)$ with $1\leq q<p$. 
The case $q=p$ is still an open problem, and in this direction some result is known if $G$ satisfies some additional conditions, 
which are not even fulfilled by the balls in Hilbert spaces. We recall that in the case $\mathcal O=X$ the surface integral in \eqref{sospiro} 
disappears and therefore \eqref{sospiro} is the usual integration-by-parts formula in abstract Wiener space (see e.g. \cite[Chapter 5]{Bog98}).

Comparing \eqref{ansimare} and \eqref{sospiro} we notice that the Minkowski functional $\p$ of $\Omega$ plays the role of the function $G$ in \cite{CL14}. However, $\p$ in general does not satisfies the assumptions of \cite{CL14} for $G$ and in this sense our result is a generalization of \eqref{sospiro}.
Moreover, our work suggests a different way to get the integration-by-parts formula by using procedures and techniques inherit from the geometric measure theory. 
This different approach gives the hope to develop in future papers a more general trace theory for Sobolev and BV functions in abstract Wiener spaces.

The paper is organized as follows.

In Section \ref{preliminaries} we define the abstract Wiener space $(X,\gamma,H)$ and the main tools of differential calculus in infinite dimension, i.e., the $H$-gradient, the $\gamma$- divergence and the Sobolev spaces $W^{1,p}(\Omega,\gamma)$, with $p\in[1,+\infty)$. Moreover, we recall the definition of functions of bounded variation both on $X$ and on an open set $A\subset X$.

In Section \ref{crumiro} we recall the definition of $\mathscr S^{\infty-1}$ and, thanks to an infinite dimensional version of the area formula, 
we prove that the epigraph of a Sobolev function has finite perimeter.

Finally, in Section \ref{coppadelmondo} we prove the integration-by-parts formula \eqref{ansimare}. To this aim we initially show that, thanks
to \cite[Lemma 6.3]{AMP10}, it is possible to choice a direction $h\in QX^*$ such that
$|D_\gamma\1_\Omega|\left(\left\{x\in X:[\nu_\Omega(x),h]_H=0\right\}\right)=0$, where $\nu_\Omega$ is the Radon-Nikodym density of
$D_\gamma\1_\Omega$ with respect $|D_\gamma\1_\Omega|$, i.e., $D_\gamma\1_\Omega=\nu_\Omega|D_\gamma\1_\Omega|$. 
We set $\Omega_h^\perp:=\{x\in \Omega:\hat h(x)=0\}$, where $\hat h\in X^*$ satisfies $h=Q\hat h$.
Then, there exist two functions $f,g:\Omega_h^\perp\rightarrow \R$ such that
$\partial \Omega=\Gamma(f,\Omega_h^\perp)\cup\Gamma(g,\Omega_h^\perp)\cup N$, where $N$ is a Borel set with null 
$\mathscr S^{\infty-1}$-measure and $\Gamma(f,\Omega_h^\perp):=\{y+f(y)h:y\in\Omega_h^\perp\}$. By applying the results of Section 
\ref{crumiro} it follows that $D_\gamma\1_\Omega=-\nu_f\mathscr S^{\infty-1}\res\Gamma(f,\Omega_h^\perp)+
\nu_g\mathscr S^{\infty-1}\res\Gamma(g,\Omega_h^\perp)$. To conclude, we show a relation between $\p$ and $f$ and $g$, which gives \eqref{ansimare}.

\section{Preliminaries}
\label{preliminaries}

Let us fix some notations. We denote by $(X,\gamma,H)$
an abstract Wiener space, i.e. a separable infinite
dimensional Banach space $X$ endowed with
a Radon centered non degenerate Gaussian measure $\gamma$ 
with Cameron--Martin
space $H$. We recall that $H$ is continuously and
compactly embedded in $X$ and that there
exists $Q:X^*\to X$ such that $QX^*\subset H\subset X$,
all these embeddings being dense by the non-degeneracy of $\gamma$. The decomposition
$Q=R_\gamma\circ j$ holds, where
$j:X^*\to L^2(X,\gamma)$ is just 
the identification of an element 
of $X^*$ as a function in $L^2(X,\gamma)$
and $R_\gamma:L^2(X,\gamma)\to X$ is defined
in terms of Bochner integral as
\[
R_\gamma(f)= \int_X f(x) x\, \gamma(dx).
\]
The reproducing kernel is defined as
\[
\mathscr{H}= \overline{j(X^*)}\subset L^2(X,\gamma),
\]
and the restriction of $R_\gamma$ on $\mathscr{H}$
gives a one--to--one correspondence between
$H$ and $\mathscr{H}$. For any $h\in H$ we shall
denote by $\hat h\in \mathscr{H}$ the unique
element such that $R_\gamma(\hat h)=h$.
Then, the Cameron--Martin space inherits 
the Hilbert structure with inner product
\[
[{h},{k}]_H=\int_X \hat h(x)\hat k(x)\gamma(dx).
\]

We denote by $\mathcal{F}C^1_b(X)$ the set 
of bounded functions $\varphi:X\to \R$ such that
there exists $n\in\N$, $x^*_1,\ldots x^*_n\in X^*$ 
and $v\in C^1_b(\R^n)$ (the space of bounded continuous functions with bounded continuous derivatives) with
\[
\varphi(x)=v(x^*_1(x),\ldots, x^*_n(x)).
\]
Without loss of generality, we can suppose that $Qx_1^*,\ldots,Qx_n^*$ are orthonormal vectors in $H$.
Further, we denote the set cylindrical $H$-valued vector fields by $\mathcal F C_b^1(X,H)$, where $\Phi\in\mathcal FC_b^1(X,H)$ if there exist $n\in\N$ and $h_1,\ldots, h_n\in H$ and $\varphi_1,\ldots,\varphi_n\in \mathcal F C_b^1(X)$ such that 
\begin{align*}
\Phi(x):=\sum_{i=1}^n\varphi_i(x) h_i.
\end{align*}

For any $\varphi\in \mathcal{F}C^1_b(X)$ and
$h\in H$ we set
\[
\partial_h \varphi(x)=\lim_{t\to 0}
\frac{\varphi(x+th)-\varphi(x)}{t}.
\]
{The separability of $X$ implies that $H$ is
separable. 

For any $\varphi\in\mathcal F C_b^1(X)$, $\varphi(x)=v(x_1^*(x),\ldots,x_n^*(x))$ for some $n\in\N$, $x_1,\ldots,x_n\in X^*$ and $v\in C_b^1(\R^n)$, we define its
$H$--gradient by 
\[
\nabla_H \varphi(x)=\sum_{i=1}^n
\partial_{Qx^*_i} \varphi(x) Qx_i^*,
\]
If $H'\subset H$ is a closed subspace and $Qx_i^*\in H'$ for any $i=1,\ldots,n$, then we write $\nabla_{H'}\varphi(x)=\nabla_H\varphi$ to enlight the dependence of $\varphi$ on the directions of $H'$.
For any $h\in H$ we also denote by
\[
\partial^*_h \varphi(x)=\partial_h \varphi(x)
-\varphi(x)\hat h(x)
\]
the formal adjoint (up to the sign) of 
$\partial_h$, in the sense that, 
for any $\varphi,\psi\in \mathcal{F}C^1_b(X)$, it holds that
\[
\int_X \varphi\partial_h \psi d\gamma
=-\int_X \partial^*_h \varphi \psi d\gamma.
\]

We introduce the divergence operator ${\rm div}_\gamma:\mathcal FC_b^1(X,H)\longrightarrow \R$ by setting
\begin{align*}
{\rm div}_\gamma \Phi(x):=\sum_{i=1}^n \partial_{h_i}\varphi_i(x)-\varphi_i(x)h_i,
\end{align*}
 with $\Phi(x)=\sum_{i=1}^n\varphi_i(x) h_i\in \mathcal FC_b^1(X,H)$. Further, for any $\Phi\in \mathcal FC_b^1(X,H)$ and any $\psi\in\mathcal FC_b^1(X)$ the following integration-by-parts formula holds:
 \begin{align*}
 \int_X[\nabla_H\psi,\Phi]_Hd\gamma=-\int_X\psi{\rm div}_\gamma \Phi d\gamma.
 \end{align*}

We stress that it is possible to fix an orthonormal
basis $\{h_i\}_{i\in \N}$ of $H$ such that $h_i=Qx^*_i$ with 
$x^*_i\in X^*$ for any $i\in \N$.}

For any $h\in QX^*$ we introduce the continuous projection $\pi_h:X\longrightarrow H$ defined by $\pi_hx=\hat h(x) h$, where $R_\gamma(\hat h)=h$. This fact induces the decomposition $X=X_h^\perp \oplus \langle h\rangle$, 
where $X_h^\perp ={\rm ker}({\pi_h})={\rm ker}(\hat h)$. Therefore, for any $x\in X$ we shall write $x=y+z$, where $y=x-\pi_hx\in X_h^\perp$ and $z=\pi_hx$. Clearly, this decomposition is unique.
Such a decomposition implies that the measure
$\gamma$ can be split as a product measure
\[
\gamma=\gamma_h^\perp \otimes \gamma_h
\]
where $\gamma_h^\perp$ and $\gamma_h$ are centred non-degenerate
Gaussian measures on $X_h^\perp$ and $\langle h\rangle$,
respectively. If $|h|_H=1$, then $\gamma_h$ is a standard Gaussian measure, i.e.
letting $z=th$, we have
\[
\gamma_h(dz)=\gamma_1(dt)=
\mathscr{N}(0,1)(dt)=\frac{1}{\sqrt{2\pi}}
e^{-\frac{t^2}{2}} dt.
\]

This argument can be generalized for any finite dimensional
subspace $F\subset QX^*\subset H$: indeed, if 
$F=\langle h_1,\ldots,h_m\rangle$ with
$\{h_i\}_{i=1,\ldots,m}$ orthonormal elements of $H$ and $h_i\in Q X^*$ for any $i=1,\ldots,m$, then we can write
$X=X_F^\perp \oplus F$, where $X_F^\perp ={\rm ker}(\pi_F)$,
$\pi_F:X\longrightarrow F$ and
\[
\pi_F(x)=\sum_{i=1}^m\hat h_i(x)h_i,
\]
and $\pi_F(h):=\sum_{i=1}^m[h,h_i]_Hh_i$ for any $h\in H$. We identify $F$ with $\R^m$ and for any $z\in F$ we denote by $|z|$ its norm in $\R^m$. We can also decompose $\gamma=\gamma_F^\perp\otimes
\gamma_F$ where $\gamma_F^\perp$ and $\gamma_F$ are 
centred non degenerate Gaussian measures on $X_F^\perp$ and $F$,
respectively. Further,{
\[
\gamma_F(dz) = 
\frac{1}{(2\pi)^{m/2}} e^{-\frac{|z|^2}{2}}\  dz.
\]
}
We recall the definition of Sobolev
spaces and functions with bounded variation
in Wiener spaces. Let $\Omega\subset
X$ be an open set. By ${\rm Lip}_b(\Omega)$ we denote the set of bounded Lipschitz continuous functions on $\Omega$, by ${\rm Lip}_c(\Omega)$ we
denote the set of functions 
$\varphi\in {\rm Lip}(X)$ {with bounded support and} such that ${\rm dist}({\rm supp}(\varphi), \Omega^c)>0$, and by {$\mathcal{F}C^1_b(\Omega)$ we
denote the set of restrictions of functions of $\mathcal{F}C^1_b(X)$ to $\Omega$}.
Clearly, ${\rm Lip}_c(\Omega)\subset{\rm Lip}_b(\Omega)$ and $\mathcal{F}C^1_b(\Omega)\subset{\rm Lip}_b(\Omega)$.
{Analogously, we define  ${\rm Lip}(\Omega, H)$ as the set of functions 
$\varphi:\ \Omega\rightarrow H$ such that there exists a positive constant $L$ which satisfies $|\varphi(x)-\varphi(y)|_H\leq L\|x-y\|_X$ for any $x,y\in X$.} ${\rm Lip}_b(\Omega,H)$ and ${\rm Lip}_c(\Omega,H)$ are defined in obvious way.

{We shall denote by $\mathscr{M}(\Omega,H)$ the
set of {$H$-valued} Borel measures defined on $\Omega\subset X$. For such measures the total
variation turns out to be given by
\begin{equation}
|\mu|(\Omega)=
\sup \left\{
\int_\Omega \scal{\Phi}{d\mu}_H: 
\Phi\in {\rm Lip}_{c}(\Omega,H), |\Phi(x)|_H\leq 1\ 
\forall x\in \Omega
\right\}. \label{norma}
\end{equation}
Equation \eqref{norma} has been proved in \cite[Lemma 2.3]{LMP15} with ${\rm Lip}_0(\Omega,H)$ instead of ${\rm Lip}_c(\Omega,H)$, but the same arguments can be adapted to prove \eqref{norma}.} We can state the following preliminary result.

\begin{lemma}
Let $1\leq p<\infty$. Then, the operator
$\nabla_H: \mathcal{F}C^1_b(\Omega)\subset L^p(\Omega,\gamma)
\to L^p(\Omega,\gamma,H)$ is closable. We denote by $W^{1,p}(\Omega,\gamma)$ the domain of its closure. {The same is true if we use ${\rm Lip}_{b}(\Omega)\subset L^p(\Omega,\gamma)$ instead of $\mathcal{F}C^1_b(\Omega)$, 
and the definition of $W^{1,p}(\Omega,\gamma)$ is equivalent.}
\end{lemma}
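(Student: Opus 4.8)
The plan is to reduce closability of $\nabla_H$ to closability of each directional derivative $\partial_{h_i}$ and to establish the latter by a one–dimensional slicing argument; the equivalence of the two definitions of $W^{1,p}(\Omega,\gamma)$ is then obtained by extension to $X$, cylindrical approximation on the whole space, and locality of the $H$-gradient. Fix the orthonormal basis $\{h_i\}_{i\in\N}$ of $H$ with $h_i=Qx_i^*\in QX^*$. Suppose $u_n\in\mathcal{F}C^1_b(\Omega)$ with $u_n\to 0$ in $L^p(\Omega,\gamma)$ and $\nabla_H u_n\to G$ in $L^p(\Omega,\gamma,H)$. Since $|\partial_{h_i}u_n-[G,h_i]_H|=|[\nabla_H u_n-G,h_i]_H|\le|\nabla_H u_n-G|_H$ pointwise $\gamma$-a.e., we get $\partial_{h_i}u_n\to G_i:=[G,h_i]_H$ in $L^p(\Omega,\gamma)$ for every $i$; as $G=\sum_iG_ih_i$ with $\sum_iG_i^2=|G|_H^2<\infty$ $\gamma$-a.e., it suffices to prove that each $\partial_{h_i}\colon\mathcal{F}C^1_b(\Omega)\subset L^p(\Omega,\gamma)\to L^p(\Omega,\gamma)$ is closable, which then forces $G_i=0$ for all $i$ and hence $G=0$. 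The very same reduction applies with ${\rm Lip}_b(\Omega)$ in place of $\mathcal{F}C^1_b(\Omega)$: if $u\in{\rm Lip}_b(\Omega)$ then, $H$ being continuously embedded in $X$, $u$ is $H$-Lipschitz, each $\partial_{h_i}u$ exists $\gamma$-a.e., and $|\nabla_H u|_H$ is bounded, so $\nabla_H u\in L^p(\Omega,\gamma,H)$ and the estimate above goes through verbatim.

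For closability of $\partial_h$ with $h\in QX^*$, $|h|_H=1$, I would use the product decomposition $X=X_h^\perp\oplus\langle h\rangle$, $\gamma=\gamma_h^\perp\otimes\gamma_h$, writing $x=y+th$. If $u_n\to 0$ in $L^p(\Omega,\gamma)$ and $\partial_h u_n\to G$ in $L^p(\Omega,\gamma)$, then by Fubini $\|u_n(y+\cdot\,h)\|_{L^p(\Omega_y,\gamma_h)}^p$ and $\|\partial_h u_n(y+\cdot\,h)\|_{L^p(\Omega_y,\gamma_h)}^p$ tend to $0$ in $L^1(X_h^\perp,\gamma_h^\perp)$, where $\Omega_y:=\{t\in\R:y+th\in\Omega\}$ is open, hence a countable disjoint union of open intervals. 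Passing to a common subsequence, for $\gamma_h^\perp$-a.e. $y$ we have $u_n(y+\cdot\,h)\to 0$ and $\tfrac{d}{dt}u_n(y+\cdot\,h)\to G(y+\cdot\,h)$ in $L^p(\Omega_y,\gamma_h)$; since $\gamma_h$ has a strictly positive continuous density, this gives $L^1_{\mathrm{loc}}$ convergence on each component interval $I$ of $\Omega_y$. Testing the classical identity $\int_I (\partial_h u_n)\phi\,dt=-\int_I u_n\phi'\,dt$ against $\phi\in C^\infty_c(I)$ and letting $n\to\infty$ yields $\int_I G(y+\cdot\,h)\phi\,dt=0$ for all such $\phi$, so $G(y+\cdot\,h)=0$ a.e. on $\Omega_y$, and by Fubini $G=0$ $\gamma$-a.e. on $\Omega$. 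This proves closability of $\partial_h$, and with it the closability statement for both $\mathcal{F}C^1_b(\Omega)$ and ${\rm Lip}_b(\Omega)$.

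For the equivalence, let $W_1$ and $W_2$ denote the domains of the two closures. Since $\mathcal{F}C^1_b(\Omega)\subset{\rm Lip}_b(\Omega)$ and the two operators agree there, $W_1\subseteq W_2$ with the same gradient. For the reverse inclusion I would take $u\in{\rm Lip}_b(\Omega)$, extend it (e.g. by the McShane formula) to $\tilde u\in{\rm Lip}_b(X)$ with the same Lipschitz constant and bound, invoke the classical Gaussian fact that bounded $X$-Lipschitz functions belong to $W^{1,p}(X,\gamma)$, pick $v_k\in\mathcal{F}C^1_b(X)$ with $v_k\to\tilde u$ in $L^p(X,\gamma)$ and $\nabla_H v_k\to\nabla_H\tilde u$ in $L^p(X,\gamma,H)$, and restrict to $\Omega$: $v_k|_\Omega\in\mathcal{F}C^1_b(\Omega)$, $v_k|_\Omega\to u$ in $L^p(\Omega,\gamma)$ and $\nabla_H v_k|_\Omega\to\nabla_H\tilde u|_\Omega$ in $L^p(\Omega,\gamma,H)$. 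Because $\nabla_H$ is a local operator — a directional derivative at $x\in\Omega$ only sees values of the function on a segment $x+(-\varepsilon,\varepsilon)h\subset\Omega$, on which $\tilde u\equiv u$ — one has $\nabla_H\tilde u|_\Omega=\nabla_H u$ $\gamma$-a.e., whence $u\in W_1$ with gradient $\nabla_H u$; thus $W_2\subseteq W_1$ and the two definitions coincide. The main obstacle is the last paragraph: carefully transferring the cylindrical approximation of a merely Lipschitz function from $X$ to $\Omega$ and verifying that the $H$-gradient genuinely localizes; the closability part is routine once the one-dimensional reduction is in place.
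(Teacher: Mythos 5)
Your proof is correct, but your closability argument takes a genuinely different route from the paper's. The paper fixes a test function $\varphi\in\mathrm{Lip}_c(\Omega)$, extends it by zero to a Lipschitz function on all of $X$, applies the global integration-by-parts formula to the cylindrical $f_j$, and concludes that $\int_\Omega[F,h]_H\varphi\,d\gamma=0$ for every such $\varphi$, whence $F=0$ via the duality formula \eqref{norma}; the argument thus rides on the known theory over $X$ and on the fact that $\mathrm{Lip}_c(\Omega)$ separates $L^1(\Omega,\gamma)$. You instead reduce to a single direction $h=Qx^*$, disintegrate $\gamma=\gamma_h^\perp\otimes\gamma_h$, and run the classical one-dimensional closability argument on the component intervals of the slices $\Omega_y$. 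Your route is more self-contained (no zero-extension trick, no appeal to \eqref{norma}) at the price of subsequence and Fubini bookkeeping; for the $\mathrm{Lip}_b$ version it also relies, somewhat tacitly, on the a.e.\ $H$-differentiability of bounded Lipschitz functions and on the agreement of the derivative along a.e.\ line with $[\nabla_Hu_n,h]_H$ -- this should be stated explicitly, as the paper does elsewhere by invoking \cite[Theorems 5.11.1 and 5.11.2]{Bog98}. The equivalence of the two definitions is proved essentially the same way in both arguments (Lipschitz extension to $X$, the classical inclusion $\mathrm{Lip}_b(X)\subseteq W^{1,p}(X,\gamma)$, restriction to $\Omega$), though you are more explicit than the paper about the locality of $\nabla_H$ needed to identify $\nabla_H\tilde u$ with $\nabla_Hu$ on $\Omega$, which is indeed the point the paper glosses over.
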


\begin{proof}
The above statement is true for $\Omega=X$ from \cite[Chapter 5]{Bog98}. By linearity, it is sufficient to prove
that if $f_j\to 0$ in $L^p(\Omega,\gamma)$ and $\nabla_H f_j
\to F$ in $L^p(\Omega,\gamma,H)$, then $F=0$. 
To this aim, we fix $\varphi\in {\rm Lip}_{c}(\Omega)$: notice that $|\nabla_H \varphi|_H\in L^\infty(X,\gamma)$, then the zero extension $\tilde\varphi=\varphi \cdot \1_\Omega$ of $\varphi$ belongs
to ${\rm Lip}(X)$ and $\partial^*_h \varphi\in L^\infty(X,\gamma)$ for any $h\in H$. Since $f_j\in \mathcal{F}
C^1_b(X)$ for any $j\in \N$ and $f_j\to 0$ in 
$L^p(\Omega,\gamma)$. Then we get
\begin{align*}
0=&
\lim_{j\to +\infty}
\int_\Omega f_j \partial^*_h \varphi d\gamma
=
\lim_{j\to +\infty}
\int_X f_j \partial^*_h \tilde\varphi d\gamma
=
\lim_{j\to +\infty}
-\int_X \partial_h f_j  \tilde \varphi d\gamma
=
\lim_{j\to +\infty}
-\int_\Omega \partial_h f_j  \varphi d\gamma \\
=& -\lim_{j\rightarrow+\infty}\int_\Omega[\nabla_hf_j,\varphi]_Hd\gamma
= -\int_\Omega [{F},{h}]_H  \varphi d\gamma,
\end{align*}
for any $h\in H$. Now, $\scal{F}{h}_H \in L^p(\Omega,\gamma)\subseteq L^1 (\Omega,\gamma)$, so
we can define $\mu\in \mathscr{M}(\Omega,H)$ by $\mu=\scal{F}{h}_H \gamma$. Therefore, \eqref{norma} gives $\mu\equiv0$. This implies
that $\scal{F}{h}_H=0$ $\gamma$--a.e. 
for every $h\in H$, and then $F=0$.

To prove the second part of the statement, we recall that the restriction to $\Omega$ of a function in $W^{1,p}(X,\gamma)$ is in $W^{1,p}(\Omega,\gamma)$, and by \cite[Chapter 5]{Bog98} we have ${\rm Lip}_b(X)\subseteq W^{1,p}(X,\gamma)$. Finally, a function in ${\rm Lip}_b(\Omega)$ can be extended to a function in ${\rm Lip}_b(X)$, and therefore  ${\rm Lip}_b(\Omega)\subseteq W^{1,p}(\Omega,\gamma)$ and we can conclude.
\end{proof}

From the definition of $W^{1,p}(\Omega,\gamma)$, it is easy to prove that
for any $f\in W^{1,p}(\Omega,\gamma)$ and $\Phi\in 
{\rm Lip}_{c}(\Omega,H)$ the following
integration by parts formula holds:
\[
\int_\Omega f{\rm div}_\gamma \Phi d\gamma
=-\int_\Omega \scal{\nabla_H f}{\Phi}_H d\gamma.
\]

We close this section by giving the definition
of functions of bounded variation both on $X$ and on open domains. For precise study of such functions see \cite{AdMeMi18}. We recall the definition on $X$.

\begin{definition}\label{jane}
{Let} $p>1$. We say that $u\in L^{p}(X,\gamma)$ is a function with bounded 
variation, i.e., $u\in BV(X,\gamma)$, if there
exists a Borel measure {$D_\gamma u\in \mathscr M(X,H)$ (said weak gradient)
such that for any $\varphi\in \mathcal F C_b^1(X)$ and any $i\in\N$ we have
\[
\int_\Omega u \partial^*_{i} \varphi d\gamma
=-\int_\Omega \varphi d(D_\gamma u)_i,
\]
where $(D_\gamma u)_i=[{D_\gamma u},{h_i}]_H$.} 
If $E\in\mathcal B(X)$ and $u=\1_E$, then we say that $E$ has finite perimeter in $X$ if $u\in BV(X,\gamma)$ and we write $P_\gamma(E,B):=|D_\gamma\1_E|(B)$, for any $B\in\mathcal B(X)$.
\end{definition}

For further informations on $BV(X,\gamma)$ we refer to \cite{AmbMirManPal10}.
\begin{definition}\label{austen}
Let $\Omega\subseteq X$ an open set and {let $p>1$}. We say that $u\in L^{{p}}(\Omega,\gamma)$ is a function with bounded 
variation, $u\in BV(\Omega,\gamma)$, if there
exists a measure {$ D_\gamma u\in \mathscr M(\Omega,H)$} (said weak gradient)
such that for any $\varphi\in {\rm Lip}_{c}(\Omega)$ and any $i\in \N$ we have
\[
\int_\Omega u \partial^*_{i} \varphi d\gamma
=-\int_\Omega \varphi d(D_\gamma u)_i,
\]
where $(D_\gamma u)_i=[{D_\gamma u},{h_i}]_H$.
\end{definition}

\begin{rmk}
In \cite{AmbMirManPal10} $BV(X,\gamma)$ has been defined starting from the Orlicz space $L({\rm Log}L)^{1/2}(X,\gamma)$ instead of $L^p(X,\gamma)$ with $p>1$. Since $L^p(X,\gamma)\subset L({\rm Log}L)^{1/2}(X,\gamma)$ for any $p>1$ Definition \ref{jane} is less general then \cite[Definition 3.1]{AmbMirManPal10}, but in our situation it is enough.\\
Moreover, the same holds for Definition \ref{austen} where $X$ is replaced by the open set $\Omega\subset X$ (see \cite{AdMeMi18}).
\end{rmk}

{It is clear that for $\Omega=X$ the above definitions are equivalent.} {Moreover, if $f\in L^{p}(X,\gamma)$ is a function with bounded variation with weak gradient $D_\gamma u$, clearly 
for every $\Omega$ open subset of $X$,
$f$ is of bounded variation with weak gradient $D_\gamma u \res \Omega$, the restriction of the measure $D_\gamma u$ to the set $\Omega$.}
{In each case, if $D_\gamma u$ exists it is unique.}

\section{Epigraph of Sobolev functions}
\label{crumiro}

Fixed $h\in QX^*$ and an open set $A\subset X^\perp_h$ and
a function $f:A\to \R$, we define the 
graph of $f$ by
\[
\Gamma(f,A):=
\left\{
x=y+f(y)h: y\in A
\right\}
\]
and the epigraph of $f$ by
\[
{\rm Epi}(f,A):=\left\{
x=y+t h: y\in A, t>f(y) 
\right\}.
\]

Let us recall the definition of 
spherical Hausdorff measure in a Wiener space
setting (see \cite{AMP10}, \cite{FD92} and 
\cite{HI10} for more details). For a given $F\subset H$ 
finite dimensional space with $F\subset QX^*$, we define
\[
\mathscr{S}_{F}^{\infty-1}(B)
=\int_{X_F^\perp} \gamma_F^\perp(dy)
\int_{B_y} G_m(z) \mathscr{S}^{m-1}(dz),
\qquad \forall B\in \B(X),
\]
where $m={\rm dim}F$, $\mathscr{S}^{m-1}$ 
is the spherical Hausdorff measure on $F$,
\[
G_m(z)=\frac{1}{(2\pi)^{m/2}} e^{-\frac{|z|^2}{2}}.
\]
and, for any $y\in X^\perp_F$,
\[
B_y=\{z\in F: y+z\in B\}
=(B-y)\cap F.
\]

Since $\mathscr S^{\infty-1}_F\leq \mathscr S^{\infty -1}_G$ if $F\subseteq G$
(see e.g. \cite[Lemma 3.2]{AMP10}, \cite[Proposition 6(ii)]{FD92} or \cite[Proposition 2.4]{HI10}), we can define the measure {
\begin{align*}
\mathscr S^{\infty-1}=\sup_{F}\mathscr S^{\infty-1}_F.
\end{align*}
The definition immediately implies that, If $A\subset X$ is a Borel set which satisfies $\mathscr S^{\infty-1}(A)<+\infty$, then $\gamma(A)=0$. If we now consider an increasing family 
$\mathcal{F}=(F_{n})_{n\in\N}\subset QX^*$ whose closure is dense in $H$, by monotone convergence we have 
that is well defined as a measure 
\[
\mathscr{S}_{\mathcal{F}}^{\infty-1}
=\sup_{n\in \N}
\mathscr{S}^{\infty-1}_{F_n}.
\]
}

From the definition, it follows that $\mathscr S^{\infty-1}_\mathcal F\leq \mathscr S^{\infty-1}$ for any $\mathcal F$ which satisfies 
the above condition. However, the first part of the proof of the following result shows that they coincide if we restrict them to the graph of 
Sobolev functions.

\begin{proposition}\label{nipote}
Let $h\in QX^*$ with $|h|_H=1$, let $A\subseteq X_h^\perp$ be an open set and
let $f\in W^{1,1}(A,\gamma_h^\perp)$. Then:
\begin{itemize}
\item[$(i)$] for any representative $\tilde f$ of $f$ we have $\mathscr S^{\infty-1}(\Gamma(\tilde f,A))<+\infty$. In particular, $\gamma(\Gamma(\tilde f,A))=0$. 
\item[$(ii)$] If $\tilde f_1,\tilde f_2$ are two representatives of $f$, then $\mathscr S^{\infty-1}(\Gamma(\tilde f_1,A)\Delta \Gamma(\tilde f_2,A))=0$ and $\mathscr S^{\infty-1}\res\Gamma(\tilde f_1,A)=\mathscr S^{\infty-1}\res\Gamma(\tilde f_2,A)$.
\item[$(iii)$] For any bounded Borel function $g:X\longrightarrow \R$ we have
\begin{align}
\label{areaformula}
\int_{\Gamma(f,A)} g(x) 
\mathscr{S}^{\infty-1}(dx)
=\int_A g(y+f(y)h) G_1(f(y)) \sqrt{1+|\nabla_H f(y)|_H^2}
\gamma_h^\perp(dy).
\end{align}
\end{itemize}
\end{proposition}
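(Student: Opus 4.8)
The plan is to reduce everything to the finite-dimensional area formula and the monotone structure of $\mathscr S^{\infty-1}$. First I would fix an increasing sequence $\mathcal F=(F_n)_{n\in\N}$ of finite-dimensional subspaces of $QX^*$, each containing $h$, whose union is dense in $H$, and write $F_n = \langle h\rangle \oplus G_n$ with $G_n\subset X_h^\perp\cap QX^*$. For a \emph{smooth cylindrical} $f$ (depending only on finitely many coordinates, say on $G_{n_0}$) the graph $\Gamma(\tilde f,A)$ is, slice by slice over $X_{F_n}^\perp$, the graph of a $C^1$ function on an open subset of $G_n\cong\R^{m-1}$, so the classical area formula in $\R^m$ gives
\[
\mathscr S^{\infty-1}_{F_n}\res\Gamma(\tilde f,A)
=\Big(\text{pushforward of }G_1(f)\sqrt{1+|\nabla_{G_n}f|^2}\,\gamma_h^\perp\Big)
\]
for every $n\ge n_0$, where the square root stabilizes because $\nabla_{G_n}f=\nabla_H f$ for $n\ge n_0$. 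Taking the supremum over $n$ (equivalently, monotone convergence, using $\mathscr S^{\infty-1}_{F_n}\uparrow\mathscr S^{\infty-1}_{\mathcal F}$ on this graph, and then arguing that on the graph $\mathscr S^{\infty-1}_{\mathcal F}$ already equals $\mathscr S^{\infty-1}$ since the extra directions contribute nothing to a set that is one-codimensional "in the $h$ direction") yields \eqref{areaformula} for smooth cylindrical $f$, and simultaneously the finiteness in $(i)$ because $G_1\le (2\pi)^{-1/2}$ and $\int_A\sqrt{1+|\nabla_H f|_H^2}\,\gamma_h^\perp\le \gamma_h^\perp(A)+\|\nabla_H f\|_{L^1}<\infty$.

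Next I would remove the cylindrical/smooth restriction by approximation. Given $f\in W^{1,1}(A,\gamma_h^\perp)$, choose cylindrical smooth $f_k\to f$ in $W^{1,1}$; up to a subsequence $f_k\to f$ $\gamma_h^\perp$-a.e. and $\nabla_H f_k\to\nabla_H f$ $\gamma_h^\perp$-a.e.\ as well (again on a further subsequence). The right-hand side of \eqref{areaformula} passes to the limit by dominated convergence once one controls $\sqrt{1+|\nabla_H f_k|_H^2}$; since $\sqrt{1+t^2}\le 1+|t|$ and $|\nabla_H f_k|_H\to|\nabla_H f|_H$ in $L^1$, a generalized dominated convergence (Scheff\'e-type) argument applies. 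For the left-hand side one uses that $\mathscr S^{\infty-1}\res\Gamma(f_k,A)$ converges weakly-$*$, or more simply one defines $f$ via a pointwise a.e.\ limit and shows directly that the graph of this particular representative satisfies the formula; this simultaneously proves that the formula is independent of representative, giving $(ii)$ once $(iii)$ is known for one representative. Alternatively, $(ii)$ can be obtained first and independently: if $\tilde f_1=\tilde f_2$ off a $\gamma_h^\perp$-null set $N\subset A$, then $\Gamma(\tilde f_1,A)\Delta\Gamma(\tilde f_2,A)\subset \Gamma(\tilde f_1,N)\cup\Gamma(\tilde f_2,N)$, and each of these is contained in a graph over a $\gamma_h^\perp$-null set, which by the slice definition of $\mathscr S^{\infty-1}_{F_n}$ has $\mathscr S^{\infty-1}_{F_n}$-measure zero for each $n$ (the inner $\mathscr S^{m-1}$ integral is finite on each slice and the outer integral is over a $\gamma_F^\perp$-null set), hence $\mathscr S^{\infty-1}$-measure zero.

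I expect the main obstacle to be the \emph{upgrade from $\mathscr S^{\infty-1}_{\mathcal F}$ to $\mathscr S^{\infty-1}=\sup_F\mathscr S^{\infty-1}_F$ over \emph{all} finite-dimensional $F\subset QX^*$}, not just those in the chosen chain containing $h$. One must show that enlarging $F$ in directions transverse to $h$ does not increase the measure of $\Gamma(f,A)$: heuristically, adding a coordinate $e$ with $e\perp h$ turns the one-codimensional graph in the $F$-slice into a one-codimensional graph in the $(F\oplus\langle e\rangle)$-slice of one higher dimension, and the area formula's Jacobian $\sqrt{1+|\nabla_{F\cap X_h^\perp}f|^2}$ only grows to $\sqrt{1+|\nabla_{(F\oplus\langle e\rangle)\cap X_h^\perp}f|^2}$, but since $f\in W^{1,1}$ has $\nabla_H f\in H$, the total is bounded by $\sqrt{1+|\nabla_H f|_H^2}$ uniformly, and passing to the directed supremum is then monotone convergence again. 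Making this transverse-enlargement step rigorous — in particular checking that the slice decomposition and the area formula are compatible under the splitting $F = (F\cap\langle h\rangle^\perp)\oplus\langle h\rangle$ and its refinements — is the technical heart; once it is in place, the finiteness statement in $(i)$, the vanishing of $\gamma$ on finite-$\mathscr S^{\infty-1}$-measure sets quoted in the preliminaries, and parts $(ii)$ and $(iii)$ all follow as above.
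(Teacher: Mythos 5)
Your overall skeleton --- slicing over $X_F^\perp$, the finite-dimensional area formula on each slice, the bound $\sqrt{1+|\pi_F(\nabla_Hf)|_H^2}\le\sqrt{1+|\nabla_Hf|_H^2}$ to control the supremum over \emph{all} finite-dimensional $F$ (after enlarging $F$ to contain $h$ by monotonicity), and monotone convergence along an increasing chain for the reverse inequality --- is exactly the paper's strategy, and you correctly identify the $\mathscr S^{\infty-1}_{\mathcal F}$ versus $\mathscr S^{\infty-1}$ upgrade as the technical heart. The genuine gap is the detour through smooth cylindrical functions followed by $W^{1,1}$-approximation. Passing to the limit on the left-hand side is not justified: the measures $\mathscr S^{\infty-1}\res\Gamma(f_k,A)$ and $\mathscr S^{\infty-1}\res\Gamma(f,A)$ are carried by disjoint graphs, so weak-$*$ convergence (even if it were established) gives no information about the value of $\mathscr S^{\infty-1}$ on the fixed set $\Gamma(f,A)\cap B$, and there is no continuity of $x\mapsto\mathscr S^{\infty-1}(\Gamma(\cdot,A))$ under $L^1$ or a.e.\ convergence of the defining functions that you could invoke without already having the area formula. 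Your fallback (``define $f$ via a pointwise a.e.\ limit and show directly that its graph satisfies the formula'') is not carried out and would at best give the formula for one distinguished representative, whereas $(i)$ and $(ii)$ require it for every Borel representative. The paper needs no approximation at all: for $f\in W^{1,1}(A,\gamma_h^\perp)$ and $F\ni h$, the slice $f_y$ is already a finite-dimensional Sobolev function on $A_y\subseteq\widetilde F$ for a.e.\ $y\in X_F^\perp$, so the finite-dimensional area formula applies directly slice by slice, yielding the key identity $\mathscr S^{\infty-1}(\Gamma(f,A)\cap B)=\int_A\1_B(y+f(y)h)G_1(f(y))\sqrt{1+|\nabla_Hf(y)|_H^2}\,\gamma_h^\perp(dy)$ for every Borel $B$, from which $(i)$, $(ii)$ and $(iii)$ all follow.

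A smaller but real issue is your direct argument for $(ii)$. The graph of an arbitrary Borel function over a Lebesgue-null set need not be $\mathscr H^{m-1}$-null in the slice (a Borel bijection from a null Cantor-type set in $\R^{m-1}$ onto a segment in the $h$-direction has a graph of positive $\mathscr H^{m-1}$-measure, by projecting onto that segment), and a representative of a $W^{1,1}$ class is arbitrary on null sets; so ``contained in a graph over a null set'' does not by itself give measure zero. Moreover, the outer integration variable in the definition of $\mathscr S^{\infty-1}_F$ lives in $X_F^\perp$, not $X_h^\perp$, so the outer integral is not over a $\gamma_F^\perp$-null set; one has to push the nullity into the inner slices via Fubini. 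The paper sidesteps both points by proving the area formula first, with a general Borel set $B$ and an arbitrary representative, and then choosing $B=\Gamma(\tilde f_1,A)\setminus\Gamma(\tilde f_2,A)$, whose indicator composed with $y\mapsto y+\tilde f_1(y)h$ is supported in the null set $\{\tilde f_1\neq\tilde f_2\}$.
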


\begin{proof}
Let us then show that for any $B\in\B(X)$ it follows that
\begin{equation}
\mathscr{S}^{\infty-1}
(\Gamma(f,A)\cap B)=
\int_{A} \1_B(y+f(y)h)G_{1}(f(y))
\sqrt{1+|\nabla_{H}f(y)|_{H}^{2}}\ \gamma_{h}^\perp
(dy),
\label{bisnonno}
\end{equation}
where we still denote by $f$ a representative of $f$, since $(i)$, $(ii)$ and $(iii)$ follow from \eqref{bisnonno}. We consider $F\subset QX^*$ 
with $\text{dim}(F)=m<\infty$, $h\in F$,
$\widetilde{F}=X_h^\perp \cap F$, $\pi_{F}$ and $ \pi_{\widetilde{F}}$ canonical
projections of $X$ on $F$ and $\widetilde{F}$, respectively, and we set $X_{F}^{\perp}=\text{ker}(\pi_{F})$ and $X_{\widetilde{F}}^{\perp}=\text{ker}(\pi_{\widetilde{F}})$. This gives $X_h^\perp=\widetilde{F}\oplus X_{F}^{\perp}$ and $F=\widetilde{F}\oplus\langle h\rangle$. Moreover, if we denote by $\gamma_{F}$, $\gamma_{\widetilde{F}}$,
$\gamma_{F}^{\perp}$, $\gamma_{\widetilde{F}}^{\perp}$
the nondegenerate Gaussian measures on $F,\widetilde{F}$, $X_{F}^{\perp}$, $X_{\widetilde{F}}^{\perp}$, respectively, we get
$\gamma=\gamma_{F}\otimes\gamma_{F}^{\perp}$, $\gamma=\gamma_{\widetilde{F}}\otimes\gamma_{\widetilde{F}}^{\perp}$ and $\gamma_h^\perp=\gamma_{\widetilde F}\otimes\gamma_F^\perp$. Then,
\[
\mathscr{S}_{F}^{\infty-1}({\Gamma(f,A)\cap B})=\int_{X_{F}^{\perp}}\gamma_{F}^{\perp}(dy)\int_{{(\Gamma(f,A)\cap B)}_{y}}G_{m}(z)\ \mathscr{S}^{m-1}(dz),
\]
where $G_{m}(z):=\frac{1}{\sqrt{{(2\pi)^{m}}}}\exp-\frac{\left\Vert z\right\Vert {}_{F}^{2}}{2}$
for $z\in F$ and
\[
(\Gamma(f,A)\cap B)_{y}=
\{z\in F|z+y\in\Gamma(f,A)\cap B\},
\]
for all $y\in X_{F}^{\perp}$.
For any $y\in X_{F}^\perp$, the set $A_y$, which a priori is contained in $F$, is indeed contained in $\widetilde F$ since $A\subset X_h^\perp$. We consider the function $f_y:A_y\longrightarrow\R$ defined by $f_y(z):=f(y+z)$. Since $f\in W^{1,1}(A,\gamma_h^\perp)$, it follows that $f_y\in W^{1,1}({A_y},\gamma_{{ F}})$ for $\gamma_{{\widetilde F}}^\perp$-a.e. $y\in X_{{F}}^\perp$. Let us denote by $\Gamma(f_{y},A_y)\subseteq F$  the graph of $f_{y}$ on $A_y$. Since $(\Gamma(f,A))_y=\Gamma(f_y,A_y)$ and
\[
(\Gamma(f,A)\cap B)_{y}=\Gamma(f_{y},A_y) \cap B_y.
\]
Therefore, writing $z\in\Gamma(f_{y},A_y)$ as $z=\tilde z+[z,h]_H h$ with $\tilde z\in \widetilde F$, we get
\[
G_{m}(z)=G_{m}(\tilde{z}+f_{y}(\tilde{z})h)=G_{m-1}(\tilde{z})G_{1}(f_{y}(\tilde{z}))
\]

Since $f_y$ is a finite--dimensional Sobolev function, it follows that
\begin{align*}
\int_{\Gamma({f}_y, A_{y})} &
\1_{B}(y+z)G_{m}(z)\ 
\mathscr{S}^{m-1}(dz)\\
=&
\int_{A_y}
\1_B(y+\tilde z+f_y(\tilde z)h)
G_{m-1}(\tilde{z})G_{1}(f_{y}(\tilde{z}))
\sqrt{1+|\nabla_{F} f_{y}(\tilde{z})|^{2}}\ d\tilde{z}\\
=&
\int_{A_y}
\1_B(y+\tilde z+f_y(\tilde z)h)
G_1(f_y(\tilde z)) 
\sqrt{1+|\nabla_{F} f_{y}(\tilde{z})|^{2}}
\gamma_{\widetilde F}(d\tilde z).
\end{align*}
Hence,
\begin{align*}
\mathscr{S}_{F}^{\infty-1}(\Gamma(f,A)\cap B)
=&\int_{X_{F}^{\perp}}\gamma_{F}^{\perp}(dy)\int_{A_y}
\1_B(y+\tilde z+f_y(\tilde z)h)
G_{1}(f_{y}(\tilde{z}))\sqrt{1+|\nabla_{F} f_{y}(\tilde{z})|_H^{2}}\ \gamma_{\widetilde{F}}(d\tilde{z})\\
=&
\int_{A}
\1_B(y+f(y)h)
G_{1}(f(y))\sqrt{1+|\pi_{F}(\nabla_{H}f(y))|_{H}^{2}}\ \gamma_{h}^\perp (dy) \\
\leq
&
\int_{A}
\1_B(y+f(y)h)
G_{1}(f(y))\sqrt{1+|\nabla_{H}f(y)|_{H}^{2}}\ \gamma_{h}^\perp (dy).
\end{align*}
Therefore,
\begin{align*}
\mathscr S^{\infty-1}(\Gamma(f,A)\cap B)
\leq
&
\int_{A}
\1_B(y+f(y)h)
G_{1}(f(y))\sqrt{1+|\nabla_{H}f(y)|_{H}^{2}}\ \gamma_{h}^\perp (dy).
\end{align*}

If we now consider an increasing family 
$\mathcal{F}=(F_{n})_{n\in\N}\subset QX^*$ whose closure is dense in $H$ {and $h\in F_1$, by
monotone convergence we obtain that 
\[
\mathscr{S}_{\mathcal{F}}^{\infty-1} (\Gamma(f,A)\cap B)
=\sup_{n\in \N}
\mathscr{S}^{\infty-1}_{F_n}(\Gamma(f,A)\cap B)
=\int_{A} \1_B\sqrt{1+|\nabla_H f(y)|_H^2} {\gamma_h^\perp}(dy).
\]
Hence we have  
\begin{align}
\mathscr{S}^{\infty-1} (\Gamma(f,A)\cap B)
=\mathscr S_{\mathcal F}^{\infty-1}(\Gamma(f,A)\cap B)= & \int_{A} \1_B(y+f(y)h)\sqrt{1+|\nabla_H f(y)|_H^2} {\gamma_h^\perp}(dy)
\label{prozio}
\end{align}
}

\vspace{2mm}
{\bf Proof of $(i)$}. If we take $B=X$ in \eqref{prozio}, then we have
\begin{align*}
\mathscr S^{\infty-1}(\Gamma(f,A))
= & \int_{A} \sqrt{1+|\nabla_H f(y)|_H^2} {\gamma_h^\perp}(dy)
\leq \int_A\left(1+|\nabla_Hf(y)|_H\right)\gamma_h^\perp(y) \\
\leq & \gamma_h^\perp(A)+\|f\|_{W^{1,1}(A,\gamma_h^\perp)}<+\infty.
\end{align*}

\vspace{2mm}
{\bf Proof of $(ii)$}. Let $\tilde f_1$ and $\tilde f_2$ be two representatives of $f$. Let us set $N:=\{y\in A:\tilde f_1(y)\neq\tilde f_2(y)\}$. Then, $\gamma_h^\perp(N)=0$ and it is easy to see that if $x=y+\tilde f_1(y)h\in \Gamma(\tilde f_1,A)\setminus \Gamma(\tilde f_2,A)$, then $y\in N$. Therefore, from \eqref{prozio} with $f$ replaced by $\tilde f_1$ and $B$ replaced by $\Gamma(\tilde f_1,A)\setminus \Gamma(\tilde f_2,A)$ we deduce that
\begin{align*}
\mathscr S^{\infty-1}(\Gamma(\tilde f_1,A)\setminus \Gamma(\tilde f_2,A))
= & \int_A\1_{(\Gamma(\tilde f_1,A)\setminus \Gamma(\tilde f_2,A))}(y+\tilde f_1(y)h)\sqrt{1+|\nabla_H \tilde f_1(y)|_H^2} {\gamma_h^\perp}(dy) \\
\leq & \int_A\1_{N}(y)\sqrt{1+|\nabla_H \tilde f_1(y)|_H^2} {\gamma_h^\perp}(dy)=0.
\end{align*}
The same arguments give $\mathscr S^{\infty-1}(\Gamma(\tilde f_2,A)\setminus \Gamma(\tilde f_1,A))=0$, and we get the first part of the statement. As far the second part is concerned, it is enough to notice that for any Borel set $B\in \B(X)$ we have
\begin{align*}
\mathscr S^{\infty-1}\res\Gamma(\tilde f_1,A)(B)
= & 
\mathscr{S}^{\infty-1} (\Gamma(\tilde f_2,A)\cap B)
= \int_{A} \1_B(y+f(y)h)\sqrt{1+|\nabla_H \tilde f_1(y)|_H^2} {\gamma_h^\perp}(dy) \\
= & \int_{A} \1_B(y+f(y)h)\sqrt{1+|\nabla_H \tilde f_2(y)|_H^2} {\gamma_h^\perp}(dy) 
=  \mathscr{S}^{\infty-1} (\Gamma(\tilde f_2,A)\cap B)\\
= & \mathscr S^{\infty-1}\res\Gamma(\tilde f_2,A)(B).
\end{align*}

\vspace{2mm}
{\bf Proof of $(iii)$}. \eqref{areaformula} follows from \eqref{prozio} simply approximating $g$ by means of simple functions.
\end{proof}

\begin{remark}\label{giampaolo}
\item[$(i)$]
Assume that the function $g$ is \eqref{areaformula} does not depend on the component of $x$ along $h$, i.e., there exists a Borel function $\ell:A\longrightarrow \R$ such that $g(x)=\ell(y)$, where $y=x-\pi_hx$. Then, if $\tilde g$ is a bounded Borel function such that $\tilde g(x)=g(x)$ for $\gamma$-a.e. $x\in X$, then
\begin{align*}
\int_{\Gamma(f,A)} g 
d\mathscr{S}^{\infty-1}=\int_{\Gamma(f,A)} \tilde g d\mathscr{S}^{\infty-1}.
\end{align*}
Indeed, for $\gamma$-a.e. $x\in X$ we have $\tilde g(x)=\ell(y)$, with $y=x-\pi_hx$, and therefore
\begin{align*}
\int_{\Gamma(f,A)} g 
d\mathscr{S}^{\infty-1}
= & \int_Ag(y+f(y)h)\sqrt {1+|\nabla_Hf(y)|^2}\gamma_h^\perp(dy)
=\int_A\ell(y)\sqrt {1+|\nabla_Hf(y)|^2}\gamma_h^\perp(dy) \\
= & \int_A\tilde g(y+f(y)h)\sqrt {1+|\nabla_Hf(y)|^2}\gamma_h^\perp(dy)
= \int_{\Gamma(f,A)} \tilde g d\mathscr{S}^{\infty-1}.
\end{align*}
\end{remark}

\begin{theorem}\label{bicicletta}
Let $h\in QX^*$ with $|h|_H=1$, let $A\subseteq X_h^\perp$ be an open set and let $f$ be a Borel representative of an element of 
$W^{1,1}(A,\gamma_h^\perp)$. 
Then, the {Borel} set
\[
Epi(f,A)=\left\{
x=y+th: y\in A, t>f(y)
\right\}
\]
has finite perimeter in the cylinder
$C_A=A\oplus \langle h\rangle$
with
\begin{align}
D_\gamma \1_{Epi(f,A)}(dx)
=\frac{-\nabla_H f(y)+h}{\sqrt{1+|\nabla_H f(y)|_H^2}}
\mathscr{S}^{\infty-1}\res \Gamma(f,A)(dx),
\label{fedor}
\end{align}
where $x=y+f(y)h$.
As a byproduct, we get
\[
P({\rm Epi}(f,A),C_A)=
\int_A G_1( f(y))
\sqrt{1+|\nabla_H f(y)|^2_H} \gamma_h^\perp (dy).
\
\]
\end{theorem}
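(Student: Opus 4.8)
The plan is to exhibit the $H$-valued measure
\[
\mu:=\frac{-\nabla_H f+h}{\sqrt{1+|\nabla_H f|_H^2}}\,\mathscr S^{\infty-1}\res\Gamma(f,A),
\]
understood on $\Gamma(f,A)$ through the identification $x=y+f(y)h\leftrightarrow y\in A$, and to verify that it is the weak gradient of $\1_{{\rm Epi}(f,A)}$ on the open set $C_A$ in the sense of Definition \ref{austen}. First I would check that $\mu$ is a finite $H$-valued Radon measure: Proposition \ref{nipote}$(i)$ gives $\mathscr S^{\infty-1}(\Gamma(f,A))<+\infty$, Proposition \ref{nipote}$(ii)$ shows the density does not depend on the representative of $f$, and since $f\in W^{1,1}(A,\gamma_h^\perp)$ forces $\nabla_H f$ to be orthogonal to $h$ in $H$ while $|h|_H=1$, the density has $H$-norm identically $1$; hence $|\mu|(C_A)=\mathscr S^{\infty-1}(\Gamma(f,A))<+\infty$. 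It remains to prove the integration-by-parts identity
\[
\int_{C_A}\1_{{\rm Epi}(f,A)}\,\partial_i^*\varphi\,d\gamma=-\int_{C_A}\varphi\,d[\mu,h_i]_H\qquad\text{for all }\varphi\in{\rm Lip}_c(C_A),\ i\in\N.
\]
Both sides depend linearly on the test direction, and $h_i=[h_i,h]_H\,h+\big(h_i-[h_i,h]_H\,h\big)$ with the second summand in $QX^*\cap X_h^\perp$, so it suffices to treat the direction $h$ and an arbitrary direction $k\in QX^*\cap X_h^\perp$, and in the latter case homogeneity lets me assume $|k|_H=1$.

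The direction $h$ is handled directly, with no approximation. Using $\gamma=\gamma_h^\perp\otimes\gamma_1$, $x=y+th$, and the identity $\partial_h^*\varphi(y+th)\,G_1(t)=\frac{d}{dt}\big(\varphi(y+th)G_1(t)\big)$ together with the bounded support of $\varphi$, the fundamental theorem of calculus yields $\int_{f(y)}^{+\infty}\partial_h^*\varphi(y+th)\,\gamma_1(dt)=-\varphi(y+f(y)h)G_1(f(y))$ for every $y\in A$, hence $\int_{C_A}\1_{{\rm Epi}(f,A)}\partial_h^*\varphi\,d\gamma=-\int_A\varphi(y+f(y)h)G_1(f(y))\,\gamma_h^\perp(dy)$; on the other side $[\mu,h]_H=(1+|\nabla_H f|_H^2)^{-1/2}\,\mathscr S^{\infty-1}\res\Gamma(f,A)$ because $[\nabla_H f,h]_H=0$, and the area formula \eqref{areaformula} turns $-\int_{C_A}\varphi\,d[\mu,h]_H$ into the same quantity. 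For a direction $k\in QX^*\cap X_h^\perp$ with $|k|_H=1$ I would first approximate: by the closability lemma of Section \ref{preliminaries}, applied to the Wiener space $(X_h^\perp,\gamma_h^\perp)$, choose cylindrical $f_n\in\mathcal{F}C^1_b(A)$ depending on finitely many directions of $QX^*\cap X_h^\perp$ with $f_n\to f$ in $W^{1,1}(A,\gamma_h^\perp)$; passing to a subsequence, $f_n\to f$ and $\nabla_H f_n\to\nabla_H f$ also $\gamma_h^\perp$-a.e. Set $\mu^{(n)}:=(1+|\nabla_H f_n|_H^2)^{-1/2}(-\nabla_H f_n+h)\,\mathscr S^{\infty-1}\res\Gamma(f_n,A)$.

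For each cylindrical $f_n$ I would prove the identity by a finite-dimensional reduction. Fix a finite-dimensional $G\subset QX^*$ containing $h$, $k$ and every direction on which $f_n$ depends, split $X=X_G^\perp\oplus G$ and $\gamma=\gamma_G^\perp\otimes\gamma_G$, and slice both integrals over $X_G^\perp$. On $\gamma_G^\perp$-a.e.\ fibre $w+G$ the section of ${\rm Epi}(f_n,A)$ is the Euclidean epigraph of the $C^1$ function $\bar f_n$ over the open set $\{z\in G\cap X_h^\perp:w+z\in A\}\subset\R^{\dim G-1}$, so the classical (finite-dimensional, Gaussian-weighted) Gauss--Green formula for such epigraphs applies; the lateral boundary contributions vanish because ${\rm dist}({\rm supp}\,\varphi,C_A^c)>0$ forces $\varphi$ to vanish near $\partial A\oplus\langle h\rangle$, and reassembling by Fubini and rewriting the Euclidean surface integral through \eqref{areaformula} gives the identity with $f$ and $\mu$ replaced by $f_n$ and $\mu^{(n)}$.

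To conclude I would let $n\to+\infty$. Since $\{x=y+th:\,t=f(y)\}$ is $\gamma$-negligible, $\1_{{\rm Epi}(f_n,A)}\to\1_{{\rm Epi}(f,A)}$ $\gamma$-a.e.\ and boundedly, and because $\partial_k^*\varphi\in L^\infty(X,\gamma)$ (here $\varphi$ Lipschitz with bounded support is used) the left-hand side passes to the limit. For the right-hand side, \eqref{areaformula} rewrites $\int_{C_A}\varphi\,d[\mu^{(n)},k]_H$ as $\int_A\varphi(y+f_n(y)h)\,[-\nabla_H f_n(y)+h,k]_H\,G_1(f_n(y))\,\gamma_h^\perp(dy)$, whose integrand converges $\gamma_h^\perp$-a.e.\ and is uniformly integrable because $|\nabla_H f_n|_H\to|\nabla_H f|_H$ in $L^1(A,\gamma_h^\perp)$; Vitali's theorem then yields convergence to $\int_{C_A}\varphi\,d[\mu,k]_H$. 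This establishes the integration-by-parts identity in all cases, hence $\1_{{\rm Epi}(f,A)}\in BV(C_A,\gamma)$ with $D_\gamma\1_{{\rm Epi}(f,A)}=\mu$, which is \eqref{fedor}. The byproduct then follows at once: since the density of $\mu$ has $H$-norm $1$, one has $|D_\gamma\1_{{\rm Epi}(f,A)}|=\mathscr S^{\infty-1}\res\Gamma(f,A)$, so $P({\rm Epi}(f,A),C_A)=\mathscr S^{\infty-1}(\Gamma(f,A))$, and \eqref{areaformula} with $g\equiv1$ identifies this with $\int_A G_1(f(y))\sqrt{1+|\nabla_H f(y)|_H^2}\,\gamma_h^\perp(dy)$. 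I expect the main obstacle to be the finite-dimensional reduction for the cylindrical $f_n$: one must carry out the slicing over a general, non-cylindrical open set $A$, control the lateral boundary terms through the support condition on $\varphi$, and -- at the very last step -- handle the passage to the limit in $W^{1,1}(A,\gamma_h^\perp)$ by a uniform-integrability (Vitali) argument rather than plain dominated convergence.
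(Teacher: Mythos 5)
Your argument is correct and arrives at the same measure $\nu_f\,\mathscr S^{\infty-1}\res\Gamma(f,A)$, but the way you verify the integration-by-parts identity in the directions orthogonal to $h$ is genuinely different from the paper's. The paper never approximates $f$: for $k\in QX^*$ with $[k,h]_H=0$ it splits $X=W\oplus\langle h,k\rangle$ with $W=\ker\pi_h\cap\ker\pi_k$, observes that for $\gamma_W$-a.e.\ $w$ the section $f_w$ is a one-variable Sobolev function, applies the one-dimensional Gauss--Green formula to the planar epigraph ${\rm Epi}(f_w,A_w)$ with normal $(-f_w',1)/\sqrt{1+(f_w')^2}$, and reassembles by Fubini; the only analytic input beyond Proposition \ref{nipote} is that a.e.\ sections of a $W^{1,1}(A,\gamma_h^\perp)$ function are one-dimensional Sobolev functions. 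You instead approximate $f$ in $W^{1,1}(A,\gamma_h^\perp)$ by cylindrical $f_n\in\mathcal FC^1_b(A)$ (legitimate, since $W^{1,1}$ is by definition the domain of the closure of $\nabla_H$ on such functions), prove the identity for each $f_n$ by slicing over a finite-dimensional $G$ containing $h$, $k$ and the directions of $f_n$ and invoking the Euclidean Gauss--Green formula for epigraphs of $C^1$ functions, and then pass to the limit by Vitali's theorem, using that $L^1$-convergence of $\nabla_Hf_n$ yields uniform integrability and that $\gamma(\Gamma(f,A))=0$ gives a.e.\ convergence of the indicators. Both routes are sound; yours trades the one-dimensional BV theory for sections of an infinite-dimensional Sobolev function against an extra limiting argument and the (correctly handled) control of lateral boundary terms via the support condition on $\varphi$. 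Your treatment of the direction $h$, the reduction to the two model directions by linearity of $k\mapsto\partial_k^*\varphi$ and of $k\mapsto[\mu,k]_H$, and the derivation of the perimeter formula from the unit $H$-norm of the density coincide with the paper's. One cosmetic slip: $\partial_k^*\varphi$ is bounded only because $\varphi$ has bounded support and $\hat k=j(k^*)$ is a continuous linear functional for $k\in QX^*$ --- which is exactly the justification you give, so no harm done.
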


\begin{proof}
At first, we stress that from Proposition \ref{nipote}$(ii)$ and Remark \ref{giampaolo} formula \eqref{fedor} does not depend on the choice of the representative $f$.  
Let us denote by $\nu_f$ the vector defined
on $C_A$ by
\begin{align}
\nu_f(x)=\frac{-\nabla_H f(y)+h}{\sqrt{1+|\nabla_H f(y)|_H^2}},
\label{ricola}
\end{align}
where $x=y+th$ with $y\in A$ and $t\in\R$. First of all, we notice that for $\varphi\in 
{\rm Lip}_{c}(C_A)$ we have that
\begin{align*}
\int_{{\rm Epi}(f,A)} \partial^*_h \varphi(x)
\gamma(dx)
=&
\int_A \gamma_h^\perp (dy)
\int_{f(y)}^\infty \partial^* \varphi_y(t) \gamma_1(dt)
\\
=&
-\int_A G_1(f(y))\varphi(y+f(y)h) \gamma_h^\perp(dy)\\
=&
-\int_A G_1(f(y))
\frac{\varphi(y+f(y)h)}{\sqrt{1+|\nabla_H f(y)|_H^2}} 
\sqrt{1+|\nabla_H f(y)|_H^2}\gamma_h^\perp(dy)\\
=&
-\int_{\Gamma(f,A)} \varphi [{\nu_f},{h}]_H 
d\mathscr{S}^{\infty-1}.
\end{align*}
In the last equality we have applied \eqref{areaformula} with $g=\nu_f$ on $C_A$ and $g= 0$ on $(C_A)^c$, noticing that $g(x)=\ell(x-\pi_{h}x)$ with $x\in C_A$. 

Let us now fix $k\in h^\perp$, $k\in Q X^*$ and
we consider $W=\text{ker}(\pi_h)\cap\text{ker}(\pi_k)$;
we have $X=W\oplus \langle h,k\rangle$ and 
$\gamma=\gamma_W\otimes \gamma_{\langle h,k\rangle}$.
We notice that for $\gamma_W$-a.e. $w\in W$
\[
({\rm Epi}(f,A))_w
=\{ z_1h+z_2k\in \langle h,k\rangle : z_1> f_w(z_2k), z_2 k\in A_w\},
\quad A_w=\{z_2k: w+z_2k \in A\},
\]
and the map $f_w:A_w\longrightarrow \R$
belongs to $W^{1,1}(A_w,\gamma_W)$.
Then, the set ${\rm Epi}(f_w,A_w)$ has finite perimeter for $\gamma_W$-a.e. $w\in W$ with
bounded inner normal given by $\nu_w=(-f'_w,1)/\sqrt{1+(f'_w)^2}$. For any $\varphi\in 
{\rm Lip}_{c}(C_A)$ we get
\begin{align*}
\int_{{\rm Epi}(f,A)} \partial^*_k \varphi(x)\gamma(dx)
=&
\int_W \gamma_W(dw)
\int_{{\rm Epi}(f_w,A_w)}
\partial^*_2 \varphi_w(z) 
\gamma_{\langle h,k\rangle}(dz)\\
=&
\int_W \gamma_W(dw)
\int_{\Gamma(f_w,A_w)} \frac{f_w'}{\sqrt{1+(f'_w)^2}} 
\varphi_w G_1(f_w)d\mathscr{S}^1\\
=&
\int_W \gamma_W(dw)
\int_{A_w} f_w'(z_2) \varphi(w+f_{{w}}(z_2)h+z_2k)
G_1(f_{{w}}(z_2))\gamma_1(dz_2)\\
=&
\int_A \partial_k f(y) \varphi(y+f(y)h)
G_1(f(y)) \gamma_h^\perp(dy)
\\
=&
-\int_A  \varphi({y+f(y)h}) [{\nu_f(y+f(y) h)},{k}]_H
G_1(f(y)) \sqrt{1+|\nabla_H f(y)|_H^2} 
\gamma_h^\perp(dy)\\
=&
-\int_{\Gamma(f,A)} \varphi [{\nu_f},{k}]_H
d\mathscr{S}^{\infty-1},
\end{align*}
where we have used the fact that
${\rm ker}(\pi_h)=W + {\langle k\rangle}$ and that $\gamma_h^\perp =\gamma_W\otimes \gamma_k$.
Let us consider an orthonormal basis $\{h,h_n:n\in\N\}\subset QX^*$ of $H$. 
Then, we have proved that for any $\varphi\in {\rm Lip}_{c,b}(C_A,H)$ and any $k\in\{h,h_n:h\in\N\}$,
\[
\int_{{\rm Epi}(f,A)} \partial_k^* \varphi(x)\gamma(dx)
=
-\int_{\Gamma(f,A)} {\varphi}[{\nu_f},k]_H \mathscr{S}^{\infty-1},
\]
i.e. the measure 
\[
\mu= \nu_f \mathscr{S}^{\infty-1}_{\mathcal{F}}
\res \Gamma(f,A)\in \mathscr{M}(C_A,H)
\]
is the distributional derivative of $\1_{{\rm Epi}(f,A)}$. Finally, Proposition \ref{nipote}$(i)$ implies that ${\rm Epi}(f,A)$ has finite perimeter in $C_A$.
\end{proof}

We conclude this section providing a useful result on epigraphs of convex and concave functions.
\begin{remark}
\label{business}
Let $h\in QX^*$ with $|h|_H=1$, let $D\subset X_h^\perp$ be an open convex domain,  let $g$ be a continuous convex function and let $f$ be a continuous concave function both defined on $D$. Then,
${\rm  Epi}(g,D)$ and $C_D\setminus \overline{{\rm Epi}(f,D)}$ are open convex subsets of $X$, and therefore ${\rm  Epi}(g,D)$ and ${\rm  Epi}(f,D)$ have finite perimeter in $X$.
Indeed, since a function is convex if and only if its epigraph is convex, from \cite[Proposition 9]{CLMN12} it follows that ${\rm Epi}(g,D)$ is convex.
Analogously, $C_D$ and $C_D\setminus \overline{{\rm Epi}(f,D)}$ have finite perimeter in $X$ since they are open convex sets. This implies that also $X\setminus {\rm Epi}(f,D)$ has finite perimeter in $C_D$, and therefore ${\rm Epi}(f,D)$ has finite perimeter.
\end{remark}

%
%

\section{Integration by parts formula on convex sets}
\label{coppadelmondo}

In this section we consider a nonempty open convex set $\Omega
\subset X$. By \cite[Proposition 9]{CLMN12}, $\Omega$ has finite perimeter
in $X$ and $\gamma(\partial\Omega)=0$, i.e. $\1_\Omega\in BV(X,\gamma)$. 
Without loss of generality we can assume that $0\in \Omega$, and we define
\[
\Omega{=}\{ \p<1\},
\]
with $\p$ being the gauge of the convex set or the Minkowski functional associated with $\Omega$ centered in $0$ defined by
\[
\p(x)=\inf \{ \lambda>0: x \in \lambda\Omega\}.
\]
The main result proved in this section is the following theorem.
\begin{theorem}
$\nabla_H\p$ is defined $\mathscr S^{\infty-1}$-almost everywhere and non-zero on 
$\partial\Omega$, for any $k\in H$ and any $\psi\in {\rm Lip}_{b}(X)$ we have that 
\begin{align*}
\int_\Omega\partial^*_k\psi d\gamma
=\int_{\partial\Omega} \psi\frac{\partial_k\p}{|\nabla_H\p|_H}d\mathscr S^{\infty-1}.
\end{align*}
\label{triciclo}
\end{theorem}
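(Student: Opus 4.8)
The strategy is to reduce the integration-by-parts formula on $\Omega$ to the epigraph result of Theorem \ref{bicicletta}, applied after a suitable choice of direction $h\in QX^*$. The outline follows the description already given in the introduction.

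\emph{Step 1: choosing a good direction $h$.} Since $\1_\Omega\in BV(X,\gamma)$, we have the polar decomposition $D_\gamma\1_\Omega=\nu_\Omega|D_\gamma\1_\Omega|$ with $|\nu_\Omega|_H=1$ a.e. By \cite[Lemma 6.3]{AMP10} we can pick $h\in QX^*$ with $|h|_H=1$ such that $[\nu_\Omega(x),h]_H\neq0$ for $|D_\gamma\1_\Omega|$-a.e. $x$. Geometrically this means that $\partial\Omega$ is, up to an $\mathscr S^{\infty-1}$-null set $N$, a graph in the direction $h$ over its projection onto $X_h^\perp$. Writing $\Omega_h^\perp=\{x\in\Omega:\hat h(x)=0\}$, convexity of $\Omega$ gives, for each $y\in\Omega_h^\perp$, that the slice $\{t:y+th\in\Omega\}$ is an open interval $(f(y),g(y))$ (possibly unbounded on either side; the cases where one endpoint is $\pm\infty$ need to be handled, but contribute no boundary piece there). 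Thus $\partial\Omega=\Gamma(f,\Omega_h^\perp)\cup\Gamma(g,\Omega_h^\perp)\cup N$ with $f$ concave and $g$ convex on $\Omega_h^\perp$, hence locally Lipschitz, hence (after checking integrability against the Gaussian weight, which follows from boundedness of $\Omega$ in measure and the Gaussian tails) in $W^{1,1}_{\mathrm{loc}}$; one should argue $f,g\in W^{1,1}(\Omega_h^\perp,\gamma_h^\perp)$ or at least that the area formula \eqref{areaformula} and Theorem \ref{bicicletta} apply. I expect this measurability/regularity bookkeeping for $f$ and $g$, together with the careful treatment of the unbounded slices, to be the main technical obstacle.

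\emph{Step 2: identifying $D_\gamma\1_\Omega$ via epigraphs.} Observe that $\Omega$, restricted to the cylinder $C_{\Omega_h^\perp}$, equals $\mathrm{Epi}(f,\Omega_h^\perp)\setminus\overline{\mathrm{Epi}(g,\Omega_h^\perp)}$ (with the convention about infinite endpoints). By Theorem \ref{bicicletta} and Remark \ref{business},
\[
D_\gamma\1_\Omega=-\nu_f\,\mathscr S^{\infty-1}\res\Gamma(f,\Omega_h^\perp)+\nu_g\,\mathscr S^{\infty-1}\res\Gamma(g,\Omega_h^\perp),
\]
where $\nu_f=(-\nabla_Hf+h)/\sqrt{1+|\nabla_Hf|_H^2}$ (inner normal of the epigraph of $f$, which is the \emph{outer} normal of $\Omega$ along $\Gamma(f,\cdot)$ with the sign shown) and $\nu_g=(-\nabla_Hg+h)/\sqrt{1+|\nabla_Hg|_H^2}$. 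A localization argument is needed to pass from finite perimeter in $C_{\Omega_h^\perp}$ to the global statement, but since $\gamma(\partial\Omega)=0$ and $\Omega$ has finite perimeter in $X$ this is routine. Then for any $\psi\in\mathrm{Lip}_b(X)$ and any basis direction $k$, Definition \ref{jane} gives $\int_\Omega\partial_k^*\psi\,d\gamma=-\int_X\psi\,d(D_\gamma\1_\Omega)_k$, which by the displayed formula equals $\int_{\partial\Omega}\psi\,[\,\cdot\,,k]_H\,d\mathscr S^{\infty-1}$ with the appropriate normal; a density argument extends from basis $k$ to all $k\in H$.

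\emph{Step 3: rewriting the normal in terms of $\p$.} It remains to show that along $\partial\Omega$ the measure-plus-normal produced above coincides with $\dfrac{\nabla_H\p}{|\nabla_H\p|_H}\,\mathscr S^{\infty-1}\res\partial\Omega$. On $\Gamma(f,\Omega_h^\perp)$ the set $\Omega$ lies \emph{above} $f$, so its outer normal points downward in $h$; since $\p$ is increasing along rays from $0$ and $\p=1$ on $\partial\Omega$, the identity $\p(y+f(y)h)=1$ differentiated in $y$ gives $\nabla_{X_h^\perp}\p+ (\partial_h\p)\nabla_Hf=0$ at boundary points, whence $\nabla_H\p$ is parallel to $-\nabla_Hf+h$ times $\partial_h\p$ — matching $-\nu_f$ up to the positive scalar $\partial_h\p/\sqrt{1+|\nabla_Hf|_H^2}\cdot|\nabla_H\p|_H$-type factor; one checks the sign using that $\partial_h\p<0$ on $\Gamma(f,\cdot)$ and $>0$ on $\Gamma(g,\cdot)$. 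Hence $-[\nu_f,k]_H = [\nabla_H\p,k]_H/|\nabla_H\p|_H$ on $\Gamma(f,\cdot)$ and $[\nu_g,k]_H=[\nabla_H\p,k]_H/|\nabla_H\p|_H$ on $\Gamma(g,\cdot)$, and since $N$ is $\mathscr S^{\infty-1}$-null the two graph contributions assemble into a single integral over $\partial\Omega$. This also shows $\nabla_H\p\neq0$ $\mathscr S^{\infty-1}$-a.e. on $\partial\Omega$ (it has nonzero $h$-component there), completing the proof.
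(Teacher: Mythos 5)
Your Steps 1 and 2 follow the paper's route: the good direction $h$ with $|D_\gamma\1_\Omega|(\{[\nu_\Omega,h]_H=0\})=0$, the decomposition $\partial\Omega=\Gamma(f,\Omega_h^\perp)\cup\Gamma(g,\Omega_h^\perp)\cup N$, and the identification of $D_\gamma\1_\Omega$ through Theorem \ref{bicicletta} plus a localization (which the paper carries out with a Lipschitz partition of unity, precisely because $f,g$ lie only in $W^{1,1}$ of small balls and not of all of $\Omega_h^\perp$ — your worry on this point is justified but surmountable). The genuine gap is in Step 3. Part of the statement to be proved is that $\nabla_H\p$ \emph{exists} $\mathscr S^{\infty-1}$-a.e.\ on $\partial\Omega$, and your argument assumes it: differentiating the identity $\p(y+f(y)h)=1$ in $y$ by the chain rule requires $\p$ to be G\^ateaux differentiable at the boundary point $y+f(y)h$. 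Lipschitz continuity of $\p$ gives differentiability only $\gamma$-a.e.\ on $X$, and $\partial\Omega$ is $\gamma$-null, so nothing is known a priori at boundary points. The paper closes this with a convex-analysis argument (Theorem \ref{grasso}): $\partial\p(x_0)\neq\varnothing$ by continuity (Proposition \ref{stereo}); any $x^*\in\partial\p(x_0)$ lies in the dual ball $C(\p)$ with $x^*(x_0)=1$ (Lemma \ref{pasta}); testing against nearby graph points forces $x^*/x^*(h)=-D_Gf(y_0)$ on $X_h^\perp$; and a separate convexity argument shows $x^*(h)>0$ (otherwise $x_0+th\notin\Omega$ for all $t<0$, contradicting that the slice below $f(y_0)$ meets $\Omega$). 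Hence the subgradient is unique, so $\p$ is G\^ateaux differentiable there by Proposition \ref{aperitivo}, with the explicit formula \eqref{tazza}. Note that the inequality $x^*(h)>0$ is also what justifies your parenthetical claim that $\nabla_H\p$ has a nonzero $h$-component (and hence is nonzero), which you assert without proof; from your own relation $\nabla_{X_h^\perp}\p=-(\partial_h\p)\nabla_Hf$, if $\partial_h\p$ vanished then the whole gradient would vanish, so this point cannot be waved away.

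A smaller issue: your labelling is internally inconsistent. You write the slice as $(f(y),g(y))$ and call $f$ concave, but a convex set lies above a convex function and below a concave one, so the lower endpoint must be the convex one; and the signs in your displayed formula for $D_\gamma\1_\Omega$ match the paper's convention ($g$ lower and convex, $f$ upper and concave), not yours. This washes out in the final formula but should be fixed for the sign bookkeeping in Step 3 to be checkable.
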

The proof of Theorem \ref{triciclo} is postponed to the end of the section.

\medskip{}

{Let us introduce some useful tools about convex functions (we refer to \cite[Chapter 5]{Phe93} for further details).
We consider the dual ball of $\p$ defined by
\begin{align*}
C(\p):=\{x^*\in X^*:\langle x^*,x\rangle \leq \p(x) \ \forall x\in X\},
\end{align*}
Moreover, we recall that, for any $x_0\in X$, the subdifferential $\partial \p(x_0)$ is the set of the elements $x^*\in X^*$ which satisfy
\begin{align*}
x^*(x-x_0) \leq \p(x)-\p(x_0), \quad \forall x\in B(x_0,r),
\end{align*}
for some $r>0$.
We will use the following property of the subdifferential of a convex function.
\begin{proposition}{\cite[Proposition 1.11]{Phe93}}
\label{stereo}
Let $f$ be a convex function which is continuous at $x_0\in D$, where $D$ is a convex domain. Then, $\partial f(x_0)$ is nonempty.
\end{proposition}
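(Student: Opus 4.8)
The plan is to realize a subgradient as a non-vertical supporting hyperplane to the epigraph of $f$, produced by the geometric Hahn--Banach separation theorem. Working in the product Banach space $X\times\R$ (with dual $X^*\times\R$), I would first form the epigraph
\[
E=\{(x,t)\in X\times\R : x\in D,\ t\geq f(x)\},
\]
which is convex precisely because $f$ is convex. The role of the continuity hypothesis is to guarantee that $E$ has nonempty interior: since $f$ is continuous at $x_0$, there exist $\delta>0$ and $M\in\R$ with $f\leq M$ on the ball $B(x_0,\delta)\subset D$, so the open set $B(x_0,\delta)\times(M,+\infty)$ is contained in $E$, whence $\mathrm{int}(E)\neq\emptyset$.

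Next I would observe that the point $(x_0,f(x_0))$ belongs to $E$ but not to its interior, since every neighbourhood of it contains points $(x_0,t)$ with $t<f(x_0)$, which lie outside $E$. The geometric Hahn--Banach theorem then yields a nonzero functional $(x^*,\alpha)\in X^*\times\R$ supporting $E$ at $(x_0,f(x_0))$, i.e.
\[
x^*(x)+\alpha t\ \geq\ x^*(x_0)+\alpha f(x_0),\qquad \forall\, (x,t)\in E.
\]

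The main point, and the only step I expect to be delicate, is to show the hyperplane is \emph{non-vertical}, namely $\alpha>0$. Letting $t\to+\infty$ with $x$ fixed near $x_0$ forces $\alpha\geq0$; and if $\alpha=0$, then taking $t=f(x)$ gives $x^*(x)\geq x^*(x_0)$ for all $x\in B(x_0,\delta)$, so the linear functional $x^*$ would attain a minimum at the interior point $x_0$ of its domain, forcing $x^*=0$ and contradicting $(x^*,\alpha)\neq0$. Once $\alpha>0$ is established, I would divide through by $\alpha$ and set $\bar x^*:=-x^*/\alpha\in X^*$. Substituting $t=f(x)$ into the support inequality and rearranging yields
\[
f(x)-f(x_0)\ \geq\ \bar x^*(x-x_0),\qquad \forall\, x\in B(x_0,\delta),
\]
which is exactly the condition $\bar x^*\in\partial f(x_0)$ appearing in the definition of the subdifferential (with $r=\delta$). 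Hence $\partial f(x_0)\neq\emptyset$, as claimed. The essential use of continuity is concentrated in two places: it makes $\mathrm{int}(E)$ nonempty so that separation applies, and it is precisely what rules out the vertical case $\alpha=0$.
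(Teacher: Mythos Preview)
Your argument is correct and is precisely the classical proof via Hahn--Banach separation of the epigraph. Note that the paper does not supply its own proof of this proposition: it is stated as a citation of \cite[Proposition~1.11]{Phe93}, and the proof in Phelps' book proceeds exactly along the lines you describe.
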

\medskip{}}

{We state the following characterization of the subdifferential $\partial \p(x)$ of a Minkowski functional in terms of $C(\p)$ (see \cite[Lemma 5.10]{Phe93}).
\begin{lemma}\label{pasta}
$x^*\in \partial \p(x)$ if and only if $x^*\in C(\p)$ and $x^*(x) =\p(x)$. \end{lemma}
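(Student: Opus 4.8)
The plan is to prove the two implications separately, using only the standard properties of the Minkowski functional recalled in \cite{Phe93}: $\p$ is convex, continuous, and positively homogeneous of degree one, so that $\p(\lambda x)=\lambda\p(x)$ for $\lambda\ge 0$ and in particular $\p(0)=0$. No infinite-dimensional machinery is needed here; the argument is purely convex-analytic.

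First I would dispatch the easy implication. Assume $x^*\in C(\p)$ and $x^*(x)=\p(x)$. For every $y\in X$ the membership $x^*\in C(\p)$ yields $x^*(y)\le\p(y)$, and therefore $x^*(y-x)=x^*(y)-x^*(x)\le\p(y)-\p(x)$. Since this holds for all $y\in X$, it holds a fortiori for $y$ in any ball $B(x,r)$, so $x^*\in\partial\p(x)$ by the definition of the subdifferential given above.

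The converse is the substantive direction. Suppose $x^*\in\partial\p(x)$, so that $x^*(y-x)\le\p(y)-\p(x)$ for all $y\in B(x,r)$ and some $r>0$. The key step — and the place where convexity is genuinely used — is to upgrade this local inequality to the global one $x^*(z-x)\le\p(z)-\p(x)$ valid for every $z\in X$. To achieve this I would fix $z\in X$, set $y_t=(1-t)x+tz$, observe that $y_t\in B(x,r)$ for $t>0$ small enough, and combine the subdifferential inequality at $y_t$, namely $t\,x^*(z-x)=x^*(y_t-x)\le\p(y_t)-\p(x)$, with the convexity estimate $\p(y_t)\le(1-t)\p(x)+t\p(z)$; dividing the resulting chain by $t>0$ gives $x^*(z-x)\le\p(z)-\p(x)$ for the arbitrary point $z$.

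Once the global inequality is in hand, the two remaining conclusions follow from well-chosen test points together with positive homogeneity. Testing at $z=0$ gives $-x^*(x)\le-\p(x)$, i.e. $x^*(x)\ge\p(x)$, while testing at $z=2x$ and using $\p(2x)=2\p(x)$ gives $x^*(x)\le\p(x)$; hence $x^*(x)=\p(x)$. Finally, testing at an arbitrary $z=y$ and substituting $x^*(x)=\p(x)$ yields $x^*(y)\le\p(y)$ for all $y$, that is $x^*\in C(\p)$. I expect the promotion from the local to the global subdifferential inequality to be the only delicate point, everything else being a direct computation with the test points $0$, $2x$, and a generic $y$.
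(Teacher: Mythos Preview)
Your proof is correct. The paper itself does not supply a proof of this lemma; it simply cites \cite[Lemma 5.10]{Phe93}. Your argument is the standard convex-analytic one: the promotion of the local subdifferential inequality to a global one via the segment $y_t=(1-t)x+tz$ together with convexity of $\p$ is exactly the right step, and the test points $z=0$ and $z=2x$ combined with positive homogeneity give $x^*(x)=\p(x)$, after which $x^*\in C(\p)$ is immediate. There is nothing to add or correct.
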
}

{In our case,} thanks to \cite[Lemma 6.2]{AMP10} we may consider $h\in QX^*$ such that
\begin{align}
|D_\gamma\1_\Omega|(\{ [{\nu_\Omega},{h}]_H=0\})=0,
\label{sinonimo}
\end{align}
with 
$D_\gamma \1_\Omega =\nu_\Omega |D_\gamma \1_\Omega|$.
This Lemma simply says that we may choose a direction
$h$ such that the vertical part of $\partial \Omega$
with respect to $h$ is $|D_\gamma\1_\Omega|$-negligible. We denote by $h^*$ the element of $X^*$ such that $h=Qh^*$.
Once such a direction has been fixed, 
we may define the open convex set $\Omega_h^\perp \subseteq X_h^\perp$ by
\[
\Omega_h^\perp =\{ y\in X^\perp_h : \exists
t\in \R \mbox{ s.t. } y+th\in \Omega \}.
\]
For any $y\in \Omega_h^\perp$, the set 
\[
\Omega_y =\{ t\in \R: y+th \in \Omega\} 
\]
is an open interval, and therefore there exist 
$f:\Omega^\perp_h\to (-\infty,+\infty]$, $g:\Omega^\perp_h\to 
[-\infty,+\infty)$ such that $\Omega_y$ is the interval $(g(y),f(y))$,
{i.e., $\Omega$ is between the graph of $g$ and that of $f$.}

\begin{lemma}\label{nevicata}
The functions $f$ and $g$ satisfy the following properties:
\begin{itemize}
\item[$(i)$] If there exists $y\in\Omega_h^\perp$ such that $f(y)=+\infty$, then $f\equiv+\infty$ on $\Omega_h^\perp$. 
Analogously, if $g(y)=-\infty$ for some $y\in\Omega_h^\perp$, then $g\equiv-\infty$ on $\Omega_h^\perp$.
\item[$(ii)$]if $f$ is not infinite then it is a concave function. Analogously, if  $g$ in not infinite then it is a convex function.\end{itemize}
\end{lemma}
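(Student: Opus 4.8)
The plan is to rely solely on the convexity of $\Omega$, together with the fact — recalled above — that $\Omega_h^\perp$ is open (it is the image of the open set $\Omega$ under the continuous projection $x\mapsto x-\pi_h x$ along the one-dimensional subspace $\langle h\rangle$). I write every point of $X$ as $y+th$ with $y\in X_h^\perp$ and $t\in\R$, so that $\Omega_h^\perp$ is exactly the projection of $\Omega$ onto $X_h^\perp$ and, for $y\in\Omega_h^\perp$, the slice $\Omega_y$ equals the open interval $(g(y),f(y))$.

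For part $(i)$, suppose $f(y_0)=+\infty$ for some $y_0\in\Omega_h^\perp$, i.e. $y_0+th\in\Omega$ for every $t>g(y_0)$. Given an arbitrary $y_1\in\Omega_h^\perp$, I would first use the openness of $\Omega_h^\perp$ to pick $\varepsilon>0$ so small that $y_2:=y_1+\varepsilon(y_1-y_0)\in\Omega_h^\perp$; then $y_1=\lambda y_0+(1-\lambda)y_2$ with $\lambda:=\varepsilon/(1+\varepsilon)\in(0,1)$. Choosing $s_2\in\R$ with $y_2+s_2h\in\Omega$ and forming the convex combinations $\lambda(y_0+th)+(1-\lambda)(y_2+s_2h)=y_1+(\lambda t+(1-\lambda)s_2)h\in\Omega$ for all $t>g(y_0)$, and then letting $t\to+\infty$, one sees that $\Omega_{y_1}$ is unbounded above, hence $f(y_1)=+\infty$. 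The claim for $g$ is completely symmetric (take $t\to-\infty$).

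For part $(ii)$, assume $f\not\equiv+\infty$; by part $(i)$ then $f$ is finite at every point of $\Omega_h^\perp$ (indeed it is real-valued, since each slice $\Omega_y$ is a nonempty open interval, so $f(y)>g(y)\ge-\infty$). To prove concavity, fix $y_1,y_2\in\Omega_h^\perp$, $\lambda\in[0,1]$ and $\varepsilon>0$: since $f(y_i)-\varepsilon<f(y_i)$, the points $y_i+(f(y_i)-\varepsilon)h$ lie in $\Omega$, and convexity of $\Omega$ forces $(\lambda y_1+(1-\lambda)y_2)+(\lambda f(y_1)+(1-\lambda)f(y_2)-\varepsilon)h\in\Omega$, whence $\lambda f(y_1)+(1-\lambda)f(y_2)-\varepsilon<f(\lambda y_1+(1-\lambda)y_2)$; letting $\varepsilon\downarrow0$ gives concavity of $f$. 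Convexity of $g$ follows identically using the points $y_i+(g(y_i)+\varepsilon)h\in\Omega$. I do not anticipate a genuine obstacle here; the only slightly delicate point is the reduction in $(i)$ to a convex combination with both coefficients strictly positive, which is exactly where the openness of $\Omega$ (through $\Omega_h^\perp$) is really used — without it one would only obtain $f\equiv+\infty$ along segments emanating from $y_0$, which need not exhaust $\Omega_h^\perp$. Everything else is the standard fact that a convex set is trapped between a concave and a convex function of the transverse variable.
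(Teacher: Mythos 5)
Your proof is correct and follows essentially the same route as the paper: in $(i)$ you express an arbitrary point of $\Omega_h^\perp$ as a strict convex combination involving $y_0$ (your explicit use of the openness of $\Omega_h^\perp$ to get both coefficients positive is a detail the paper glosses over) and let $t\to+\infty$ along the unbounded slice, and in $(ii)$ you take convex combinations of points in the slices $\Omega_{y_i}$ and pass to the supremum (via $t_i=f(y_i)-\varepsilon$, $\varepsilon\downarrow 0$, versus the paper's arbitrary $t_i$ followed by taking the sup). No gaps.
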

\begin{proof}
{To show $(i)$, let us assume that there exists 
$y_0\in \Omega_h^\perp$
such that $f(y_0)=+\infty$, and let $y\in \Omega_h^\perp$. Therefore, there exists $y_1\in \Omega_h^\perp$ and $\lambda>0$ s.t. $y=\lambda y_0 + (1-\lambda)y_1$.
From the definition of $\Omega_h^\perp$ there exists $t_1\in\R$ s.t. $x_1=y_1+t_1h\in\Omega$, and since $f(y_0)=+\infty$ we have $x_0=y_0+t h\in \Omega$ for every $t\in(g(y_0),+\infty)$. Since $\Omega$ is convex, we have $\lambda x_0+(1-\lambda)x_1=y+(\lambda t+(1-\lambda)t_1)h\in\Omega$ and therefore
\begin{align*}
f(y)\geq \lambda t+(1-\lambda)t_1, \quad t\in(g(y),+\infty),
\end{align*}
which gives $f(y)=+\infty$.}

{The same argument holds for $g$, i.e., if there 
exists $y_0\in \Omega^\perp_h$ such that
$g(y_0)=-\infty$, then $g\equiv -\infty$.}

\vspace{2mm}
Let us prove $(ii)$. Assume that $g>-\infty$ on $\Omega_h^\perp$. We fix $y_1,y_2 
\in \Omega_h^\perp$, $t_1\in \Omega_{y_1}$,
$t_2\in \Omega_{y_2}$, then for any $\lambda\in[0,1]$
\[
\lambda (y_1+t_1 h)+(1-\lambda)(y_2+t_2h)
=\lambda y_1+(1-\lambda) y_2+
(\lambda t_1 +(1-\lambda)t_2)h
\in \Omega.
\]
This means that $\tilde y:=\lambda y_1+(1-\lambda)y_2\in\Omega_h^\perp$ and $\lambda t_1+(1-\lambda)t_2\in\Omega_{\tilde y}$. Therefore, 
\[
g(\lambda y_1+(1-\lambda) y_2) \leq 
\lambda t_1 +(1-\lambda)t_2
{\leq}
f(\lambda y_1+(1-\lambda) y_2).
\]
Since this is true for any $t_1$ and $t_2$, this 
implies that
\[
g(\lambda y_1+(1-\lambda) y_2)\leq
\lambda g(y_1)+(1-\lambda)g(y_2)
\]
hence $g$ is convex. same arguments reveal that for any $\lambda\in[0,1]$ we have
\[
\lambda f(y_1)+(1-\lambda) f(y_2)
\leq
f(\lambda y_1+(1-\lambda) y_2),
\]
which implies that $f$ is concave.
\end{proof}

Thanks to Lemma \ref{nevicata} (and by the fact that $\Omega$ is a nonempty set, hence it is 
impossible that $f=-\infty$ everywhere or $g=+\infty$ everywhere), only the following four cases occur:
\begin{enumerate}
\item $f\equiv +\infty$, $g\equiv -\infty$ and 
$\Omega_h^\perp =X^\perp_h$, i.e. $\Omega=\Omega_h^\perp\oplus \langle h\rangle$;
\item $f\equiv +\infty$ and $g(y)>-\infty$ for 
any $y\in \Omega_h^\perp$, and then
$\Omega={\rm Epi}(g,\Omega_h^\perp)$ .
\item $g\equiv -\infty$ and $f(y)<+\infty$ for 
any $y\in \Omega_h^\perp$, and then
$\Omega=\{ x=y+th: y\in \Omega_h^\perp, t<f(y)\}$.
\item $-\infty< g(y)<f(y)<+\infty$  for 
any $y\in \Omega_h^\perp$ and
\[
\Omega=\{ x=y+th: y\in \Omega_h^\perp, 
t\in (g(y),f(y))\}.
\]
\end{enumerate}
From now on we shall assume to be in the last case, since in the other three cases the following
lemmas remain true, with the convention that $\Gamma(f,\Omega_h^\perp)=\varnothing$ if $f=+\infty$, and  
 $\Gamma(g,\Omega_h^\perp)=\varnothing$ if $g=-\infty$.
 
Before passing to the infinite dimension, we state a property of open convex sets in finite dimension. 
\begin{remark}
\label{camilleri}
For any open convex set $C\subset \R^n$, $\partial^* C=\partial C$, i.e., each point of $\partial C$ has density different from $0$ and $1$. Let $n=2$, let us fix $x\in\partial C$ and {let $\p$ be its Minkowski function. By Proposition \ref{stereo}, there exists $\nu\in\partial \p(x) $, hence
$C$ remains below the hyperplane with equation $\langle \nu,\cdot -x\rangle=0$ (it suffices to remember the definition of $\partial \p$
and the fact that if $y\in C$ then $\p(y)<1$, while $\p(x)=1$}), 
which implies that its density is not greater than $1/2$. Further, let $B(x_0,r)\subset C$. 
The convexity of $C$ implies that the convex hull of $\{x,B(x_0,r)\}$ is contained in $C$, 
and in particular the triangle with vertices $x,x_0$ and $x_1$, where $x_1$ satisfies $|x_1-x_0|=r$ and $x_1-x_0\perp x-x_0$. 
Therefore, for any $\rho>0$ a sector of angle $2\arctan(r|x-x_0|^{-1})$ of the ball $B(x,\rho)$ is contained in $C$. This gives
\begin{align*}
\frac{|C\cap B(x,\rho)|}{|B(x,\rho)|}\geq 2{\rm arctg}\left(\frac{r}{|x-x_0|}\right)>0,
\end{align*} 
for any $\rho>0$, and so the density of $x$ is greater than $0$. The general case $n\in\N$ follows from similar arguments.
\end{remark}

Let $\mathcal F$ be a countable family of finite dimensional subspaces of $QX^*$ stable under finite union and such that $\cup_{F\in\mathcal F}F$ 
is dense in $H$. In \cite{HI10} the $\mathcal F$-essential boundary of $\Omega$ is defined by
\begin{align*}
\partial^*_{\mathcal F}\Omega=\bigcup_{F\in\mathcal F}\bigcap_{G\supset F, G\in\mathcal F}\partial^*_G\Omega,
\end{align*}
where
\begin{align*}
\partial_F^*\Omega:=\{y+z:y\in {\rm Ker}(\pi_F), \ z\in\partial^*(\Omega_y)\},
\end{align*}
for any $F\in\mathcal F$. In general, $\partial_F^*\Omega$ does not satisfy any monotonicity property with respect to $F\in\mathcal F$. 
However, in the case of open convex sets we recover the finite dimensional situation with the next Lemma.

\begin{lemma}\label{salama}
Let $\Omega\subset X$ be an open convex set and let $\mathcal F$ be as above.
Then, $\partial ^*_{\mathcal F}\Omega=\partial\Omega$.
\end{lemma}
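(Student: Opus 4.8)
The plan is to show the two inclusions $\partial^*_{\mathcal F}\Omega \subseteq \partial\Omega$ and $\partial\Omega \subseteq \partial^*_{\mathcal F}\Omega$. For the first inclusion, recall that for each $F \in \mathcal F$ and each $y \in \ker(\pi_F)$, the slice $\Omega_y$ is an open convex subset of $F \cong \R^m$, so by Remark \ref{camilleri} we have $\partial^*(\Omega_y) = \partial(\Omega_y)$. Hence $\partial^*_F\Omega = \{y+z : y \in \ker(\pi_F),\ z \in \partial(\Omega_y)\}$, which is contained in $\partial\Omega$: indeed if $z \in \partial(\Omega_y)$ then $y+z$ is a limit of points $y+z_n$ with $z_n \in \Omega_y$, so $y+z_n \in \Omega$, and also a limit of points in $(\Omega_y)^c$ along $F$, so $y+z \notin \Omega$ (using that $\Omega$ is open, its complement is closed, and $z \notin \Omega_y$ means $y+z \notin \Omega$). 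Thus $y+z \in \overline{\Omega}\setminus\Omega = \partial\Omega$. Since this holds for every $F$, taking the intersection over $G \supseteq F$ and then the union over $F$ keeps us inside $\partial\Omega$.

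For the reverse inclusion, fix $x \in \partial\Omega$. I want to produce some $F_0 \in \mathcal F$ such that $x \in \partial^*_G\Omega$ for every $G \in \mathcal F$ with $G \supseteq F_0$; then $x$ lies in the inner intersection indexed by $F_0$, hence in the union $\partial^*_{\mathcal F}\Omega$. Write $x = y_G + z_G$ with $y_G \in \ker(\pi_G)$, $z_G = \pi_G(x) \in G$; we must check $z_G \in \partial^*(\Omega_{y_G}) = \partial(\Omega_{y_G})$ (again by Remark \ref{camilleri}, valid once $\Omega_{y_G}$ is a nonempty open convex set). The key point is that since $\Omega$ is open and $x \in \partial\Omega$, $x \notin \Omega$, so $z_G \notin \Omega_{y_G}$, giving $z_G \notin \text{int}(\Omega_{y_G})$. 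It remains to show $z_G \in \overline{\Omega_{y_G}}$, i.e. that the slice $\Omega_{y_G}$ is nonempty and accumulates at $z_G$. Here I would use that $\Omega$ is open and convex with, say, $0 \in \Omega$ (as arranged in the section): for a fixed inner point $x_0 \in \Omega$, the half-open segment $[x_0, x)$ lies in $\Omega$; projecting by $\pi_G$, the points $\pi_G((1-t)x_0 + tx) = (1-t)\pi_G(x_0) + t z_G$ for $t \in [0,1)$ — wait, these need not lie in $\Omega_{y_G}$ unless $\pi_{\ker \pi_G}$ of the segment is constant. So instead I would argue directly: the point is that $(1-t)x_0 + t x \in \Omega$ has $\ker(\pi_G)$-component $(1-t)\pi_{\ker\pi_G}(x_0) + t y_G \to y_G$ and $G$-component $\to z_G$; this shows $y_G \in \Omega^\perp_G$ and, more carefully using the product structure $\Omega_y$ varies, that $z_G$ is a limit of points of slices over $\ker(\pi_G)$-components converging to $y_G$. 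To upgrade this to $z_G \in \overline{\Omega_{y_G}}$ itself, I would use convexity and openness: since $\Omega$ is open and convex and contains a point whose $G$-component can be taken near $z_G$ while its $\ker\pi_G$-component equals $y_G$ exactly — obtained by first going slightly into $\Omega$ from $x$ and then adjusting the $\ker\pi_G$-coordinate back to $y_G$, which stays in $\Omega$ for small adjustments by openness.

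The main obstacle I expect is precisely this last step: controlling the slice $\Omega_{y_G}$ over the \emph{exact} base point $y_G$ and showing $z_G \in \partial(\Omega_{y_G})$ rather than merely that $x$ is approximated by points in nearby slices. The cleanest way around it is to exploit that $\Omega$ is open: pick $x_n \to x$ with $x_n \in \Omega$; by openness there is $r_n > 0$ with $B(x_n, r_n) \subset \Omega$, and we may take $x_n$ so that $\pi_{\ker\pi_G}(x_n) = y_G$ exactly once $r_n$ exceeds the (controlled, vanishing) distance $\|\pi_{\ker\pi_G}(x_n) - y_G\|$ — this is where convexity toward the interior point $x_0$ gives the quantitative estimate $\|\pi_{\ker\pi_G}(x_n) - y_G\| \lesssim \|x_n - x\|$, which is eventually $< r_n$ after shrinking along the segment. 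Then the modified points lie in $\Omega$ and in the slice $\Omega_{y_G}$, and their $G$-components converge to $z_G$, so $z_G \in \overline{\Omega_{y_G}}$. Combined with $z_G \notin \Omega_{y_G}$ (openness of $\Omega$) and Remark \ref{camilleri}, we get $z_G \in \partial^*(\Omega_{y_G})$, hence $x \in \partial^*_G\Omega$ for all such $G$, completing the proof.
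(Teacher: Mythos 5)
Your first inclusion $\partial^*_{\mathcal F}\Omega\subseteq\partial\Omega$ is correct and matches the paper. The reverse inclusion, however, has a genuine gap, and it sits exactly where you flagged the "main obstacle." First, you never actually construct $F_0$, and your plan needs $x\in\partial^*_G\Omega$ for \emph{every} $G\supseteq F_0$; this forces $F_0$ to contain a direction pointing from $x$ into $\Omega$, otherwise the slice through $x$ can miss $\Omega$ entirely (take $\Omega=\{(u,v):u>0\}\subset\R^2$, $x=0$, $G$ the $v$-axis: then $\Omega_{y_G}=\varnothing$ and $x\notin\partial^*_G\Omega$). Second, the quantitative step fails: any radius $r_n$ with $B(x_n,r_n)\subset\Omega$ satisfies $r_n\le\mathrm{dist}(x_n,\partial\Omega)\le\|x_n-x\|_X$, while the correction you must absorb is $\|(I-\pi_G)(x_n-x)\|_X$, which is comparable to $\|x_n-x\|_X$ (and can exceed it, since $\|I-\pi_G\|$ need not be $\le 1$). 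Both quantities vanish at the same linear rate along the segment toward $x$, so the inequality "correction $<r_n$" is a fixed, scale-invariant condition that shrinking does not eventually produce; your adjusted points need not lie in $\Omega$.

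The paper circumvents both issues with two moves you are missing. (a) It first proves the monotonicity $\partial^*_F\Omega\subseteq\partial^*_G\Omega$ for $F\subseteq G$ (by splitting $G=F\oplus L$ and transporting the approximating sequence $z_n\in\Omega_y$ to $\pi_Ly+z_n\in\Omega_{\tilde y}$); this collapses the inner intersection, so it suffices to exhibit \emph{one} $F$ with $x\in\partial^*_F\Omega$. (b) It chooses that $F$ adapted to $x$: since $\bigcup_{F\in\mathcal F}F$ is dense in $X$ and $\Omega$ contains an open ball $B$, one can write $x=y+\xi$ with $y\in B$ and $\xi\in F$. Then $y$ and $x$ have the \emph{same} $\ker(\pi_F)$-component $y_F$, so the whole segment $y+\lambda\xi$, $\lambda\in[0,1)$, lies in $\Omega\cap(y_F+F)$; its $F$-components converge to $z_F$, giving $z_F\in\overline{\Omega_{y_F}}\setminus\Omega_{y_F}=\partial(\Omega_{y_F})$ with no radius estimate at all. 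I would rewrite your reverse inclusion along these lines.
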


\begin{proof}
{At first, we claim that $\partial^*_{F}\Omega\subseteq \partial^*_G\Omega$ if $F\subseteq G$, for any $F,G\in\mathcal F$. 
Let $F\in\mathcal F$ and let $y+z\in \partial^*_{F}\Omega$. This means that $y\in{\rm Ker}(\pi_{F})$ and $z\in\partial(\Omega_y)$ 
($\partial(\Omega_y)=\partial^*(\Omega_y)$ since it is convex, see Remark \ref{camilleri}). 
Let $G\in\mathcal F$ be such that $F\subseteq G$. In particular, there exists a finite dimensional subspace $L$ of $QX^*$ such that $G=F \oplus L$. 
If $L=\{0\}$, we are done. Assume that $L\neq \{0\}$. Therefore, $y+z=y-\pi_L y+\pi_{L}y+z=:\tilde y+\tilde z$, 
where $\tilde y=y-\pi_L y$ and $\tilde z:=\pi_Ly+z$.
Clearly, $\tilde y\in {\rm Ker}(\pi_{G})$ and $\tilde z\in G$. It remains to prove that $\tilde z\in\partial^*(\Omega_{\tilde y})$. 
Since $\Omega_{\tilde y}$ is a finite dimensional open convex set, from Remark \ref{camilleri} it is equivalent to show that 
$\tilde z\in\partial(\Omega_{\tilde y})$. By contradiction, we suppose that $\tilde z\in \Omega_{\tilde y}$. 
Then, $y+z=\tilde y+\tilde z\in \Omega$, and so $z\in \Omega_y$. This contradicts the assumptions, since $\Omega_y$ is open 
and $z\in\partial^*(\Omega_y)=\partial(\Omega_y)$. Moreover, $\tilde z\in \overline{\Omega_{\tilde y}}$. Indeed, since $z\in\partial (\Omega_y)$, 
there exists a sequence $(z_n)\in \Omega_y$ which converges to $z$ in $X$. Obviously, the sequence $(\tilde z_n:=\pi_L(y)+z_n)$ converges to $\tilde z$ 
in $X$ and $\tilde y+\tilde z_n=y+z_n\in\Omega$, which means that $\tilde z\in(\overline{\Omega_{\tilde y}})$. 
Hence, $\tilde z\in\partial(\Omega_{\tilde y})=\partial^*(\Omega_{\tilde y})$, since $\Omega_{\tilde y}$ is convex.
This means that $\tilde y+\tilde z\in\partial^*_{G}(\Omega)$, and the claim is therefore proved.
}

In particular, the claim implies that $\partial_{\mathcal F}^*\Omega=\cup_{F\in\mathcal F}\partial_{F}^*\Omega$.
We remark that $\cup_{F\in\mathcal F}F$ is dense in $X$. This fact easily follows from the density of $\cup_{F\in\mathcal F}F$ in $H$, 
the density of $H$ in $X$ and the continuous embedding $H\hookrightarrow X$. We stress that, for any $F\in \mathcal F$ and any $y\in{\rm Ker}(\pi_{F})$, 
arguing as above we deduce that $\partial(\Omega_y)\subset(\partial \Omega)_y$. Hence, $\partial^*_{\mathcal F}\Omega\subset \partial \Omega$.
To show the converse inclusion, we consider $x\in \partial \Omega$. Since $\Omega$ is open, there exists an open ball $B\subset \Omega$. 
Clearly, $\tilde B:=x-B$ is an open ball in $X$, and the density of $\cup_{F\in\mathcal F}F$ in $X$ implies that there exists $F\in\mathcal F$ and $\xi\in F$ 
such that $\xi=x-y$, with $y\in B$, i.e., $x=y+\xi$. If we define $y_F=y-\pi_{F}y\in{\rm Ker}(\pi_{F})$ 
and $z_F:=\pi_{F}y+\xi$, it remains to prove that $z_F\in\partial (\Omega_{y_F})=\partial^*(\Omega_{y_F})$. Clearly, $z_F\notin\Omega_{y_F}$, 
otherwise $x=y_F+z_F\in\Omega$. Further, since $y\in \Omega$ and $x\in\partial \Omega$, for any $\lambda\in[0,1)$
we have $y+\lambda\xi=y+\lambda(x-y)\in\Omega$. 
Taking a sequence $(\lambda_m)_m\in\N\subset (0,1)$ converging to $1$, we obtain a sequence $(\eta_m=\lambda_mz_F)_{m\in\N}\subset\Omega_{y_F}$ 
which converges to $z_F$ in $F$, and so $z_F\in\overline {\Omega_{y_F}}$, which gives $x\in\partial^*_F\Omega$.
\end{proof}

\begin{remark}\label{amarena}
From \cite{AMP10} and \cite{HI10}, we know that for any $B\in\mathcal B(X)$ 
we have $|D_\gamma\1_\Omega|(B)=\mathscr S^{\infty-1}_{\mathcal F}(B\cap \partial_{\mathcal F}^*\Omega)$, 
for any countable family $\mathscr F$ of finite dimensional subspaces of $QX^*$ stable under finite union such that 
$\cup_{F\in\mathcal F} F$ is dense in $H$. In particular, if $\mathcal F'$ satisfies the same assumptions as $\mathcal F$, 
then from Lemma \ref{salama} we deduce that 
$\mathscr S^{\infty-1}_{\mathcal F}(B\cap\partial\Omega)=\mathscr S^{\infty-1}_{\mathcal F'}(B\cap \partial\Omega)$. 
Therefore, $\mathscr S^{\infty-1}_{\mathcal F}\res\partial\Omega=\mathscr S^{\infty-1}_{\mathcal F'}\res\partial\Omega$ for any $\mathcal F,\mathcal F'$ 
as above and from the definition of $\mathscr S^{\infty-1}$ we infer that 
$\mathscr S^{\infty-1}_\mathcal F\res{\partial \Omega}=\mathscr S^{\infty-1}_{\mathcal F'}\res{\partial \Omega}=
\mathscr S^{\infty-1}\res\partial\Omega$. In particular, we get $|D_\gamma\1_\Omega|(B)=\mathscr S^{\infty-1}(B\cap\partial\Omega)$ 
for any $B\in\mathcal B(X)$. 
\end{remark}

\begin{lemma}\label{cementine}
Let $\Omega$ be an open convex set, $h\in QX^*$, $C:=\Omega_h^\perp\oplus \langle h\rangle$ and 
let $f,g$ be the functions introduced in Lemma \ref{nevicata}. Then
\begin{align}\label{cubo}
\partial \Omega=\Gamma(f,\Omega_h^\perp)\cup\Gamma(g,\Omega_h^\perp)\cup N,
\end{align}
where the sets in the right-hand side of \eqref{cubo} are pairwise disjoint, and $\mathscr S^{\infty-1}(N)=0$. In particular,
\[
\mathscr{S}^{\infty-1}
(\partial \Omega\setminus (\Gamma(f,\Omega_h^\perp)\cup
\Gamma(g,\Omega_h^\perp)))=0.
\]
\end{lemma}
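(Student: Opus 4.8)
The plan is to obtain the decomposition \eqref{cubo} together with the pairwise disjointness almost for free from the definitions of $f$ and $g$, and then to reduce the assertion $\mathscr S^{\infty-1}(N)=0$ to the choice \eqref{sinonimo} of the direction $h$ by computing explicitly the $h$-component of $D_\gamma\1_\Omega$ via slicing along $h$. I argue in case $4$, i.e. $-\infty<g(y)<f(y)<+\infty$ on $\Omega_h^\perp$; the other cases are identical once one reads $\Gamma(f,\Omega_h^\perp)=\varnothing$ when $f\equiv+\infty$ and $\Gamma(g,\Omega_h^\perp)=\varnothing$ when $g\equiv-\infty$. Replacing $h$ by $h/|h|_H$ affects neither \eqref{sinonimo} nor the subsets $\Gamma(f,\Omega_h^\perp)$, $\Gamma(g,\Omega_h^\perp)$, $\partial\Omega$ of $X$, so I may and do assume $|h|_H=1$; recall also that $\1_\Omega\in BV(X,\gamma)$ by \cite[Proposition 9]{CLMN12}, so $D_\gamma\1_\Omega\in\mathscr M(X,H)$ exists.

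For the geometric part, fix $y\in\Omega_h^\perp$. Since $f(y)\notin\Omega_y=(g(y),f(y))$ we have $y+f(y)h\notin\Omega$, while choosing $s_n\uparrow f(y)$ with $s_n\in(g(y),f(y))$ gives $y+s_nh\in\Omega$ and $y+s_nh\to y+f(y)h$ in $X$, whence $y+f(y)h\in\overline\Omega$; as $\Omega$ is open, $\Gamma(f,\Omega_h^\perp)\subseteq\partial\Omega$, and symmetrically $\Gamma(g,\Omega_h^\perp)\subseteq\partial\Omega$. The two graphs are disjoint: projecting an equality $y_1+f(y_1)h=y_2+g(y_2)h$ onto $X_h^\perp$ forces $y_1=y_2$, hence $f(y_1)=g(y_1)$, contradicting $g<f$. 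Defining $N:=\partial\Omega\setminus(\Gamma(f,\Omega_h^\perp)\cup\Gamma(g,\Omega_h^\perp))$ we get \eqref{cubo} with the three sets pairwise disjoint (and all Borel, being $\partial\Omega$, graphs of Borel functions over Borel sets, and the complement), so it only remains to prove $\mathscr S^{\infty-1}(N)=0$. For intuition one can check, arguing as in Lemma~\ref{nevicata} and using that the convex function $g$ and the concave function $f$ are continuous on the open convex set $\Omega_h^\perp$, that any $x=y+th\in\partial\Omega$ with $y\in\Omega_h^\perp$ satisfies $t\in\{g(y),f(y)\}$, so $N$ sits above the relative boundary of $\Omega_h^\perp$; but the argument below does not need this.

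The key step is the computation of $\int_\Omega\partial_h^*\varphi\,d\gamma$ for $\varphi\in\mathcal{F}C^1_b(X)$. Using $\gamma=\gamma_h^\perp\otimes\gamma_1$ and $\hat h(y+sh)=s$ for $y\in X_h^\perp$, and writing $\varphi_y(s):=\varphi(y+sh)$, one has $\partial_h^*\varphi(y+sh)=\varphi_y'(s)-s\varphi_y(s)$, hence $G_1(s)\,\partial_h^*\varphi(y+sh)=\frac{d}{ds}\big(G_1(s)\varphi_y(s)\big)$. Since $|\partial_h^*\varphi(y+sh)|\le C(1+|s|)$, Fubini applies, and integrating first in $s$ over the bounded interval $(g(y),f(y))$ gives
\[
\int_\Omega\partial_h^*\varphi\,d\gamma=\int_{\Omega_h^\perp}\big(G_1(f(y))\,\varphi(y+f(y)h)-G_1(g(y))\,\varphi(y+g(y)h)\big)\,\gamma_h^\perp(dy).
\]
Let $\mu^+$ (resp. $\mu^-$) be the finite nonnegative Borel measure on $X$ obtained by pushing $G_1(f(y))\gamma_h^\perp(dy)$ forward under $y\mapsto y+f(y)h$ (resp. $G_1(g(y))\gamma_h^\perp(dy)$ under $y\mapsto y+g(y)h$) over $\Omega_h^\perp$; these maps are Borel since $f,g$ are continuous on $\Omega_h^\perp$, and $\mu^\pm$ are concentrated on the disjoint sets $\Gamma(f,\Omega_h^\perp)$, $\Gamma(g,\Omega_h^\perp)$. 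The displayed identity reads $\int_\Omega\partial_h^*\varphi\,d\gamma=\int_X\varphi\,d(\mu^+-\mu^-)$ for all $\varphi\in\mathcal{F}C^1_b(X)$; since such functions are measure-determining and, by definition of $D_\gamma\1_\Omega$, $\int_\Omega\partial_h^*\varphi\,d\gamma=-\int_X\varphi\,d(D_\gamma\1_\Omega)_h$ with $(D_\gamma\1_\Omega)_h:=[D_\gamma\1_\Omega,h]_H$, we conclude $(D_\gamma\1_\Omega)_h=\mu^--\mu^+$. As $\mu^\pm\ge0$ are mutually singular, this is the Jordan decomposition, so $|(D_\gamma\1_\Omega)_h|=\mu^++\mu^-$ is concentrated on $\Gamma(f,\Omega_h^\perp)\cup\Gamma(g,\Omega_h^\perp)$; in particular $|(D_\gamma\1_\Omega)_h|(N)=0$.

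Finally I combine this with the choice of $h$. Writing $D_\gamma\1_\Omega=\nu_\Omega|D_\gamma\1_\Omega|$ we get $(D_\gamma\1_\Omega)_h=[\nu_\Omega,h]_H\,|D_\gamma\1_\Omega|$, hence $|(D_\gamma\1_\Omega)_h|=|[\nu_\Omega,h]_H|\,|D_\gamma\1_\Omega|$, and $|(D_\gamma\1_\Omega)_h|(N)=0$ forces $|D_\gamma\1_\Omega|\big(N\cap\{[\nu_\Omega,h]_H\neq0\}\big)=0$. Together with \eqref{sinonimo}, which gives $|D_\gamma\1_\Omega|\big(\{[\nu_\Omega,h]_H=0\}\big)=0$, this yields $|D_\gamma\1_\Omega|(N)=0$. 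Since $N\subseteq\partial\Omega$, Remark~\ref{amarena} gives $\mathscr S^{\infty-1}(N)=\mathscr S^{\infty-1}(N\cap\partial\Omega)=|D_\gamma\1_\Omega|(N)=0$, which is the claim (and in particular $\mathscr S^{\infty-1}(\partial\Omega\setminus(\Gamma(f,\Omega_h^\perp)\cup\Gamma(g,\Omega_h^\perp)))=0$). I expect the only non-routine point to be precisely this last reduction: $N$ cannot be controlled by a direction-free slicing argument, since for a cylindrical $\Omega$ the "vertical" part of $\partial\Omega$ relative to a bad direction may carry positive surface measure, so the special choice \eqref{sinonimo} — which localizes the $h$-component of the perimeter on the two graphs — is genuinely essential; the slicing computation, the Borel measurability of the graph maps, and the fact that $\mathcal{F}C^1_b(X)$ is measure-determining are all standard.
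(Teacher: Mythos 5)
Your proof is correct, and for the key point --- showing $\mathscr S^{\infty-1}(N)=0$ --- it takes a genuinely different route from the paper. The paper identifies $N$ explicitly as $\partial\Omega\cap\partial C$ with $C=\Omega_h^\perp\oplus\langle h\rangle$ (after a direct geometric argument showing $\partial\Omega\cap C=\Gamma(f,\Omega_h^\perp)\cup\Gamma(g,\Omega_h^\perp)$), and then kills $N$ by invoking \cite[Corollary 2.3]{AMP15}: since $\Omega\subset C$ are nested convex sets, $\nu_\Omega=\nu_C$ $\mathscr S^{\infty-1}$-a.e.\ on $\partial\Omega\cap\partial C$, and $[\nu_C,h]_H=0$ on the boundary of the cylinder, so \eqref{sinonimo} applies. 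You instead compute $(D_\gamma\1_\Omega)_h$ directly by one-dimensional slicing along $h$ (the identity $G_1(s)\partial_h^*\varphi(y+sh)=\tfrac{d}{ds}(G_1(s)\varphi_y(s))$ plus the fundamental theorem of calculus on $(g(y),f(y))$), identify it as $\mu^--\mu^+$ with $\mu^\pm$ mutually singular and carried by the two graphs, and conclude that $|(D_\gamma\1_\Omega)_h|$ charges nothing off the graphs; the choice \eqref{sinonimo} then upgrades this to $|D_\gamma\1_\Omega|(N)=0$, and both proofs finish identically via Remark \ref{amarena}. Your argument is more self-contained (it avoids \cite{AMP15} altogether and in effect pre-computes the $h$-component of Lemma \ref{quattro}), at the cost of not identifying $N$ geometrically as the part of $\partial\Omega$ lying over $\partial(\Omega_h^\perp)$ --- which the lemma does not require, since you define $N$ as the residual set, but which the paper's version supplies as a byproduct. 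The routine points you flag (Borel measurability of the graphs via continuity of $f,g$ from Lemma \ref{james}, the measure-determining property of $\mathcal FC^1_b(X)$, the harmless normalization $|h|_H=1$, and the treatment of the degenerate cases where $f\equiv+\infty$ or $g\equiv-\infty$) are indeed standard and handled correctly.
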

\begin{proof}
Since
$C$ is convex, from Remark \ref{amarena} it follows that $D_\gamma \1_C=\nu_C \mathscr S^{\infty-1}\res\partial C $.
Further, $\partial \Omega=(\partial\Omega\cap C)\cup N$. Since $\Omega\subset C$, we have $N=\partial \Omega\cap \partial C$, and by 
\cite[Corollary 2.3]{AMP15} $\nu_\Omega(x)=\nu_C(x)$ for $\mathscr S^{\infty-1}$-a.e. $x\in \partial\Omega\cap\partial C$. By construction,
$[ \nu_C(x),h]_H=0$ for $\mathscr S^{\infty-1}$-a.e. $x\in\partial C$, and so $[\nu_\Omega(x),h]_H=0$ for $\mathscr S^{\infty-1}$-a.e. $x\in N$. 
Therefore, \eqref{sinonimo} gives $|D_\gamma\1_{\Omega}|(N)=0$, and since $N\subset \partial\Omega$, from Remark \ref{amarena} 
we deduce that $\mathscr S^{\infty-1}(N)=\mathscr S^{\infty-1}(N\cap \partial\Omega)=|D_\gamma\1_{\Omega}|(N)=0$.
 
It remains to show that $\partial \Omega\cap C=\Gamma(g,\Omega_h^\perp) \cup \Gamma(f,\Omega_h^\perp)$.
At first, we suppose that $x\in \Gamma(g,\Omega_h^\perp)$. Hence, there exists $y\in \Omega_h^\perp$ such that $x=y+g(y)h$. Arguing as above, we deduce that $x\in \overline {\Omega}\setminus \Omega=\partial \Omega$, and clearly $x\in C$. Further, the same arguments hold true for $x\in\Gamma(f,\Omega_h^\perp)$. Inclusion $\supseteq$ is therefore proved.

To show the converse inclusion, we assume that $x\in \partial\Omega\cap C$. Therefore, there exists $\delta\in\R$ such that $x+\delta h\in\Omega$. 
Let us assume that $\delta>0$. If we set $y:=(I-\pi_h)x\in\Omega_h^\perp$ and $z:=\pi_hx$, then $y+z+th\in\Omega$ 
for any $t\in(0,\delta)$ (because $\Omega$ is convex), i.e., $z+th\in\Omega_y$ for any $t\in(0,\delta)$. Letting $t\rightarrow0$, we get that $\pi_hx\in \overline{\Omega_y}$. 
Necessarily, $\pi_hx\notin\Omega_y$, otherwise $x=y+z\in\Omega$, which contradicts the fact that $x\in\partial \Omega$. 
Hence, $z\in\partial (\Omega_y)=\{g(y),f(y)\}$ and, since $\delta>0$, we deduce that $z=g(y)$ which means $x=y+g(y)h\in\Gamma(g,\Omega_h^\perp)$. 
If $\delta<0$, arguing as above we infer that $z=f(y)$, from which it follows that $x=y+f(y)h\in\Gamma(f,\Omega_h^\perp)$. 
\end{proof}

\begin{lemma}
\label{james}
For any $y_0\in \Omega_h^\perp$, there exists $r_0=r_0(y_0)>0$ such that $f,g$ are bounded Lipschitz functions on $B(y_0,r_0)\cap X_h^\perp$. As a byproduct, $f$ and $g$ are G\^ateaux differentiable $\gamma_h^\perp$-a.e. $\in B(y_0,r_0)\cap X_h^\perp$ and belong to $W^{1,1}(B(y_0,r_0)\cap X_h^\perp,\gamma_h^\perp)$, for any $y_0\in \Omega_h^\perp$.
\end{lemma}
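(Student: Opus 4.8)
The plan is to exploit the local boundedness of a convex set around an interior point together with the classical fact that a convex (or concave) function that is bounded above on a neighbourhood is automatically locally Lipschitz there. First I would fix $y_0\in\Omega_h^\perp$ and use the openness of $\Omega$ to find a ball $B(x_0,2\rho)\subset\Omega$ with $x_0=y_0+t_0h$ for a suitable $t_0$; intersecting with $X_h^\perp$ this gives $B(y_0,2\rho)\cap X_h^\perp\subset\Omega_h^\perp$, so $f,g$ are at least finite on $B(y_0,2\rho)\cap X_h^\perp$. Next, since $B(x_0,2\rho)\subset\Omega$ and $\Omega$ is between the graphs of $g$ and $f$, every point $y\in B(y_0,\rho)\cap X_h^\perp$ satisfies $g(y)\le t_0-\rho< t_0+\rho\le f(y)$, which already shows $g$ is bounded above and $f$ is bounded below on $B(y_0,\rho)\cap X_h^\perp$. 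The one remaining one-sided bound comes from the existence of a supporting functional: pick any point $\bar x\in\partial\Omega$ near $x_0$ (or simply use that $\Omega$ is a proper convex set, so by Proposition 1.11 there is $x^*\in\partial\p(\bar x)\subset C(\p)$), which confines $\Omega$ to one side of an affine hyperplane and hence forces $f$ to be bounded above and $g$ bounded below on a possibly smaller ball $B(y_0,r_0)\cap X_h^\perp$; here one uses that $\langle h^*, x^*\rangle\neq 0$ is guaranteed, up to shrinking $r_0$, by the choice of $h$ in \eqref{sinonimo}, which rules out vertical supporting hyperplanes on a set of positive surface measure — more directly, one can just observe that if $f$ were unbounded above on every ball around $y_0$ then case (1) or (2) of the trichotomy would hold, contradicting our standing assumption that we are in case (4).

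With $g$ (resp.\ $-f$) now convex and locally bounded on $B(y_0,r_0)\cap X_h^\perp$, I would invoke the standard convex-analysis estimate: a convex function bounded by $M$ on $B(y_0,2r_0)$ is Lipschitz on $B(y_0,r_0)$ with constant controlled by $M/r_0$ (the three-point / "cone" argument, valid verbatim in any normed space, cf.\ \cite[Chapter 5]{Phe93}). This yields that $f$ and $g$ are bounded Lipschitz on $B(y_0,r_0)\cap X_h^\perp$ with respect to the $X$-norm.

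For the byproduct, Gâteaux differentiability $\gamma_h^\perp$-almost everywhere on $B(y_0,r_0)\cap X_h^\perp$ follows because a Lipschitz function on a Banach space is Gâteaux differentiable off a Gauss-null (hence $\gamma_h^\perp$-null) set — this is exactly the property already quoted in the introduction for the Minkowski functional $\p$, applied now to $f$ and $g$ — and membership in $W^{1,1}(B(y_0,r_0)\cap X_h^\perp,\gamma_h^\perp)$ is immediate from Lemma 1.6 (the $\mathrm{Lip}_b$-characterization of the Sobolev space), since a bounded Lipschitz function on the open set $B(y_0,r_0)\cap X_h^\perp$ lies in $\mathrm{Lip}_b$ of that set.

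The main obstacle is the "upper" bound for $f$ (and the symmetric lower bound for $g$): local boundedness of a convex set only gives one-sided control for free, and the genuinely one-sided quantity $f$ needs a supporting-hyperplane argument to be bounded from above. The cleanest route is the one I indicated: being in case (4) of the trichotomy means $f(y)<+\infty$ for all $y$, and convexity of $\Omega$ upgrades pointwise finiteness of the concave function $f$ to local upper-boundedness near any interior point $y_0$ — concretely, if $f$ were $+\infty$-like unbounded near $y_0$ one builds, using convexity of $\Omega$ and a fixed point where $f$ is finite, a direction along which $f=+\infty$, contradicting case (4) via Lemma 1.10(i). Once both one-sided bounds are in hand the Lipschitz estimate is routine.
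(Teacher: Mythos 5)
Your overall skeleton (a ball $B(x_0,2r_0)\subset\Omega$ yields the ``easy'' one-sided bounds, convexity upgrades boundedness to local Lipschitz continuity, and the byproduct follows from a.e.\ G\^ateaux differentiability of Lipschitz functions together with the ${\rm Lip}_b$-characterization of $W^{1,1}$) is the same as the paper's, and the first and last steps are fine. The genuine gap is in how you obtain the remaining one-sided bounds ($f$ from above, $g$ from below): neither of your two proposed routes works as stated. The supporting-functional route needs an $x^*\in\partial\p(\bar x)$ with $x^*(h)\neq 0$, but \eqref{sinonimo} is a statement about $|D_\gamma\1_\Omega|$-almost every boundary point and gives no control on the particular subgradient produced by Proposition \ref{stereo} at a particular chosen $\bar x$, so ``vertical'' supporting hyperplanes cannot be excluded pointwise this way. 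The trichotomy route is a non sequitur: case (4) only says $f$ is finite everywhere, and in an infinite-dimensional space a finite concave function on an open set need not be locally bounded above; moreover, if $f(y_n)\to+\infty$ with $y_n\to y_0$, concavity sends the reflected values $f(2y_0-y_n)$ to $-\infty$, so no point or direction along which $f=+\infty$ is produced and Lemma \ref{nevicata}$(i)$ cannot be invoked.

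The correct and much simpler step, which is what the paper does, is that the ``hard'' bounds are free consequences of the ``easy'' ones by midpoint reflection: since $g$ is convex and $g\le t_0$ on the symmetric set $B(y_0,2r_0)\cap X_h^\perp$, for $y$ in that set and $y'=2y_0-y$ one has $g(y_0)\le\tfrac12 g(y)+\tfrac12 g(y')\le\tfrac12 g(y)+\tfrac12 t_0$, whence $g(y)\ge 2g(y_0)-t_0$; symmetrically, $f$ concave with $f\ge t_0$ there gives $f(y)\le 2f(y_0)-t_0$. Equivalently, the classical theorem you quote only requires an \emph{upper} bound on a symmetric neighbourhood for a convex function, applied here to $g$ and to $-f$; the two-sided bound you were trying to supply by a separate geometric argument is part of that theorem's proof, not an extra hypothesis. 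With this replacement your argument closes exactly along the paper's lines via \cite[Proposition 1.6]{Phe93} and \cite[Theorems 5.11.1 and 5.11.2]{Bog98}.
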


\begin{proof}
Let us consider the function $g$; the argument
for $f$ is similar. 
We show that for any $y_0\in\Omega_h^\perp$ there exists $r_0>0$ such that $g\in{\rm Lip}(B(y_0,r_0))$. To this aim, let us fix $y_0\in\Omega_h^\perp$. Hence, there exists $t_0\in\R$ such that $x_0:=y_0+t_0h\in\Omega$, and we can find $r_0>0$ such that $B(x_0,2r_0)\subset \Omega$. We claim that $B(y_0,2r_0)\cap X_h^\perp\subset\Omega_h^\perp$ and $g(y)\leq t_0$ for any $y\in B(y_0,2r_0)\cap X_h^\perp$: indeed, $\|y+t_0h-x_0\|_X=\|y-y_0\|_X<2r_0$, and so $y+t_0h\in B(x_0,2r_0)\subset \Omega$. This implies that $y\in\Omega_h^\perp$ and $t_0\in\Omega_{y}$, which means $g(y)\leq t_0$ for any $y\in B(y_0,2r_0)\cap X_h^\perp$.
Hence, $g$ is convex and bounded from above on a symmetric domain. We claim that $g$ is bounded on $B(y_0,2r_0)\cap X_h^\perp$. Indeed, for any $y\in B(y_0,2r_0)\cap X_h^\perp$ let us consider $y'=y_0-(y-y_0)$. Then, we have
\begin{align*}
g(y_0)=g\left(\frac12y+\frac12y'\right)\leq \frac12g(y)+\frac12g(y')\leq \frac12 g(y)+\frac12 t_0.
\end{align*}
Hence, $g(y)\geq 2g(y_0)-t_0$. Since $B(y_0,r)+rB(0,1)= B(y_0,2r)$,
we infer that $g\in {\rm Lip}(B(y_0,r)\cap X_h^\perp)$ (see \cite[Proposition 1.6 and the successive Remark therein]{Phe93}).

The remain part follows from \cite[Theorems 5.11.1 and 5.11.2]{Bog98} and from the definition of Sobolev space $W^{1,1}(A,\gamma_h^\perp)$ with $A\subset X_h^\perp$ open set.
\end{proof}

{\begin{remark}\label{huxley}
We denote by $D_Gf$ and $D_Gg$ the G\^ateaux derivatives of $f$ and $g$, respectively, where they are defined, and analogously by $\nabla_Hf$ and $\nabla_Hg$ their $H$-derivatives where they are defined.
\begin{itemize}
\item [$(i)$]
The family $\mathscr A:=\{B(y_0,r_0)\cap X_h^\perp\subset \Omega_h^\perp: y_0\in\Omega_h^\perp, \ f,g\in{\rm Lip}_b(B(y_0,r_0)\cap X_h^\perp)\}$ is an open covering of $\Omega_h^\perp$. Since $X$ is separable, $\mathscr A$ admits a countable subcovering $\{B(y_n,r_n)\cap X_h^\perp\subset \Omega_h^\perp: y_n\in\Omega_h^\perp, \ f,g\in{\rm Lip}_b(B(y_n,r_n)\cap X_h^\perp), \ n\in\N\}$. 
Hence, $\nabla_Hf(y)$ and $\nabla_H g(y)$ (and also $D_Gf(y)$ and $D_Gg(y)$) are defined $\gamma_h^\perp$-a.e. $y\in \Omega_h^\perp$ and for such a values of $y$ we have $\nabla_Hf(y)=R_{\gamma}D_Gf(y)$ and $\nabla_Hg(y)=R_{\gamma}D_Gg(y)$.
\item [$(ii)$]
From \cite[Corollary 1.4]{AlbMaRoc97} there exists a partition of unity of Lipschitz functions subordinated to $\{B(y_n,r_n)\cap X_h^\perp:n\in\N\}$, i.e., there exists an open locally finite covering $\{A_n:n\in\N\}$ of $\Omega_h^\perp$ such that for any $n\in\N$ there exists $m=m(n)$ with $\overline{A_n}\subset B(y_m,r_m)\cap X_h^\perp$, and there exists a family $\{\psi_n:n\in\N\}\subset {\rm Lip}_b(X_h^\perp)$ such that ${\rm supp}(\psi_n)\subset A_n$ for any $n\in\N$, $\psi_n\geq 0$ for any $n\in\N$ and $\sum_{n\in\N}\psi_n=1$.
\end{itemize}
\end{remark}

Now we are ready to show the link between $D_\gamma \1_\Omega$ and $f$ and $g$.
\begin{lemma}
\label{quattro}
Let $\Omega$, $\Omega_h^\perp$, $f$ and $g$ as above. Then, 
\begin{align}
\label{aldous}
D_\gamma\1_\Omega=-\nu_{f}\mathscr S^{\infty-1}\res\Gamma(f,\Omega_h^\perp)+\nu_g\mathscr S^{\infty-1}\res\Gamma(g,\Omega_h^\perp),
\end{align}
where $\nu_f$ and $\nu_g$ have been defined in \eqref{ricola}. 
\end{lemma}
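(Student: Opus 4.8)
The plan is to pin down the $H$-valued measure $D_\gamma\1_\Omega$ through its total variation and then identify its polar vector separately on $\Gamma(f,\Omega_h^\perp)$ and on $\Gamma(g,\Omega_h^\perp)$, by comparing $\Omega$ near each of these graphs with a genuine epigraph to which Theorem \ref{bicicletta} applies. I carry out the argument in case $(4)$; cases $(1)$--$(3)$ are handled in the same way, and more easily, under the stated conventions.

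First I would split the measure. Since $\1_\Omega\in BV(X,\gamma)$ we may write $D_\gamma\1_\Omega=\nu_\Omega\,|D_\gamma\1_\Omega|$. By Remark \ref{amarena} and Lemma \ref{cementine}, for every Borel $B$ one has $|D_\gamma\1_\Omega|(B)=\mathscr S^{\infty-1}(B\cap\partial\Omega)=\mathscr S^{\infty-1}\big(B\cap(\Gamma(f,\Omega_h^\perp)\cup\Gamma(g,\Omega_h^\perp))\big)$, so that $|D_\gamma\1_\Omega|=\mathscr S^{\infty-1}\res(\Gamma(f,\Omega_h^\perp)\cup\Gamma(g,\Omega_h^\perp))$; since these two graphs are disjoint Borel sets this gives $D_\gamma\1_\Omega=(D_\gamma\1_\Omega)\res\Gamma(f,\Omega_h^\perp)+(D_\gamma\1_\Omega)\res\Gamma(g,\Omega_h^\perp)$. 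Hence \eqref{aldous} reduces to the two identities $(D_\gamma\1_\Omega)\res\Gamma(g,\Omega_h^\perp)=\nu_g\,\mathscr S^{\infty-1}\res\Gamma(g,\Omega_h^\perp)$ and $(D_\gamma\1_\Omega)\res\Gamma(f,\Omega_h^\perp)=-\nu_f\,\mathscr S^{\infty-1}\res\Gamma(f,\Omega_h^\perp)$.

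Next I would establish these two identities locally. Fix $y_0\in\Omega_h^\perp$; by Lemma \ref{james} there is $r_0>0$ such that, setting $A_0:=B(y_0,r_0)\cap X_h^\perp$, both $f|_{A_0}$ and $g|_{A_0}$ belong to $W^{1,1}(A_0,\gamma_h^\perp)$. Using the strict inequality $g(y_0)<f(y_0)$, the continuity of $f$ and $g$ near $y_0$ (Lemma \ref{james}) and the boundedness of the projection $I-\pi_h$, one finds an open ball $V\ni y_0+g(y_0)h$ contained in the cylinder $C_{A_0}:=A_0\oplus\langle h\rangle$ on which $\Omega$ coincides with ${\rm Epi}(g,A_0)$, and, analogously around any $y_1+f(y_1)h$, an open ball $W$ contained in $C_{A_1}$ on which $\1_\Omega=\1_{C_{A_1}\setminus{\rm Epi}(f,A_1)}$ $\gamma$-almost everywhere (the two sets differing only on $\Gamma(f,A_1)$, which is $\gamma$-negligible by Proposition \ref{nipote}$(i)$). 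Now a $BV$ function and a function $\gamma$-a.e.\ equal to it on an open set $U$ have the same weak gradient on $U$, because the pairing in Definition \ref{austen} against $\varphi\in{\rm Lip}_c(U)$ only sees values on $U$; moreover $\1_{C_{A_1}\setminus{\rm Epi}(f,A_1)}=\1_{C_{A_1}}-\1_{{\rm Epi}(f,A_1)}$ on $C_{A_1}$ and $\1_{C_{A_1}}$ has vanishing weak gradient there. Combining these remarks with Theorem \ref{bicicletta} applied on $A_0$ and on $A_1$ yields
\begin{align*}
(D_\gamma\1_\Omega)\res V=\nu_g\,\mathscr S^{\infty-1}\res\big(\Gamma(g,\Omega_h^\perp)\cap V\big),\qquad (D_\gamma\1_\Omega)\res W=-\nu_f\,\mathscr S^{\infty-1}\res\big(\Gamma(f,\Omega_h^\perp)\cap W\big),
\end{align*}
with $\nu_f,\nu_g$ as in \eqref{ricola}.

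Finally, as $y_0$ ranges over $\Omega_h^\perp$ the balls $V=V(y_0)$ form an open cover of $\Gamma(g,\Omega_h^\perp)$ and the balls $W=W(y_1)$ an open cover of $\Gamma(f,\Omega_h^\perp)$; by separability of $X$ I extract countable subcovers, and since the local expressions above agree on overlaps they glue to the two global identities of the first step, which together with the splitting give \eqref{aldous}. I expect the main obstacle to be precisely the localization step: one must check carefully that the local graph description of $\Omega$ produces genuine open subsets of $X$ (not merely of cylinders over $X_h^\perp$) on which $\1_\Omega$ agrees $\gamma$-a.e.\ with an epigraph covered by Theorem \ref{bicicletta}, and that the resulting measures patch consistently. (Alternatively one may phrase this step through the polar vector: by \cite[Corollary 2.3]{AMP15}, $\nu_\Omega$ coincides $\mathscr S^{\infty-1}$-a.e.\ with $\nu_g$ on $\Gamma(g,A_0)\cap V$ and with $-\nu_f$ on $\Gamma(f,A_1)\cap W$, which combined with the splitting yields the claim.)
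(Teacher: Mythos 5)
Your argument is correct, but it is organized quite differently from the paper's. The paper starts from the global set identity $\Omega={\rm Epi}(g,\Omega_h^\perp)\setminus\bigl(\Gamma(f,\Omega_h^\perp)\cup{\rm Epi}(f,\Omega_h^\perp)\bigr)$ and first derives $D_\gamma\1_\Omega=D_\gamma\1_{{\rm Epi}(g,\Omega_h^\perp)}-D_\gamma\1_{{\rm Epi}(f,\Omega_h^\perp)}$ (its \eqref{hugo}), using Remark \ref{business} (convexity of the epigraphs) to know these are finite measures; it then computes each epigraph derivative with a partition of unity in the $X_h^\perp$-variable, via the cylinder-localization identity \eqref{brunone}, precisely because $f,g$ are only in $W^{1,1}$ of the small balls from Lemma \ref{james}. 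You instead split $D_\gamma\1_\Omega$ at the outset into its restrictions to the two graphs by combining Remark \ref{amarena} with Lemma \ref{cementine}, and then identify the polar locally on genuine open balls $V,W$ of $X$ where $\Omega$ literally coincides (up to a $\gamma$-null graph) with an epigraph over a small ball, invoking locality and uniqueness of the weak gradient plus Theorem \ref{bicicletta}, and finally glue through a countable subcover. What your route buys is that you never need the global epigraphs to have finite perimeter nor the identity \eqref{brunone} and its summability discussion; what it costs is a heavier reliance on Remark \ref{amarena} for the initial splitting and on the strict separation $g(y_0)<f(y_0)$ to carve out the ball $V$ (which does hold on $\Omega_h^\perp$, since $\Omega_y=(g(y),f(y))$ is a nonempty open interval), whereas the paper's set-difference decomposition treats the degenerate cases $f\equiv+\infty$ or $g\equiv-\infty$ uniformly. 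Both proofs ultimately reduce to Theorem \ref{bicicletta} applied on the balls $B(y_n,r_n)\cap X_h^\perp$ where $f,g\in W^{1,1}$, so the essential analytic input is the same.
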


\begin{proof}
Let $\varphi\in \mathcal F C_b^1(X)$. Since $\Omega$ has finite perimeter, for any $k\in H$ we have
\begin{align*}
\int_\Omega\partial_k^*\varphi d\gamma=-\int_X\varphi d[D_\gamma\1_\Omega,k]_H.
\end{align*}
From Proposition \ref{business} with $D$ in place of $\Omega_h^\perp$, we know that 
both ${\rm Epi}(g,\Omega_h^\perp)$ and ${\rm Epi}(f,\Omega_h^\perp)$ have finite perimeter, 
and $\Omega={\rm Epi}(g,\Omega_h^\perp)\setminus(\Gamma(f,\Omega_h^\perp)\cup{\rm Epi}(f,\Omega_h^\perp))$. Therefore,
\begin{align*}
\int_{\Omega}\partial_k^*\varphi d\gamma
=&  \int_{{\rm Epi}(g,\Omega_h^\perp)}\partial_k^*\varphi d\gamma-\int_{{\rm Epi}(f,\Omega_h^\perp)}\partial_k^*\varphi d\gamma \\
= & -\int_X\varphi d[D_\gamma\1_{{\rm Epi}(g,\Omega_h^\perp)},k]_H
+ \int_X\varphi d[D_\gamma\1_{{\rm Epi}(f,\Omega_h^\perp)},k]_H \\
=  & -\int_X\varphi d[\nu,k]_H,
\end{align*}
since Lemma \ref{cementine} gives $\gamma(\Gamma(f,\Omega_h^\perp))=0$. Here, 
$\nu=D_\gamma\1_{{\rm Epi}(g,\Omega_h^\perp)}-D_\gamma\1_{{\rm Epi}(f,\Omega_h^\perp)}$. Therefore,
\begin{align}\label{hugo}
D_\gamma\1_\Omega=D_\gamma\1_{{\rm Epi}(g,\Omega_h^\perp)}-D_\gamma\1_{{\rm Epi}(f,\Omega_h^\perp)}.
\end{align}
{By the finiteness of the perimeter of ${\rm Epi}(g,\Omega_h^\perp)$ we have that $|D_\gamma\1_{{\rm Epi}(g,\Omega_h^\perp)}|= \mathscr S^{\infty-1}\res\Gamma(g,\Omega_h^\perp) $
is a finite measure. Further, for any $\varphi\in {\rm Lip}_b(X_h^\perp)$ such that $\overline{{\rm supp}(\varphi)}\subset B(y_{m(n)},r_{m(n)})\cap X_h^\perp$ for some $n\in\N$, any $\theta\in{\rm Lip}_b(X)$ and any $k\in H$ we have
\begin{align}
\int_X\theta(x)\varphi(x-\pi_hx) [D_\gamma\1_{{\rm Epi}(g,\Omega_h^\perp)},k]_H(dx)
= & - \int_X\1_{{\rm Epi}(g,\Omega_h^\perp)}\partial_k^*(\theta(x)\varphi(x-\pi_hx))\gamma(dx) \notag \\
= & -  \int_X\1_{{\rm Epi}(g,B(y_{m(n)},r_{m(n)})\cap X_h^\perp)}\partial_k^*(\theta(x)\varphi(x-\pi_hx))\gamma(dx) \notag \\
= & \int_X\theta(x)\varphi(x-\pi_hx)[D_\gamma\1_{{\rm Epi}(g,B(y_{m(n)},r_{m(n)})\cap X_h^\perp)},k]_H(dx).\label{brunone}
\end{align}
By density equality \eqref{brunone} holds for any $\theta \in\B_b(X)$. Let $\{\psi_n:n\in\N\}$ be the partition of unity introduced in Remark \ref{huxley} $(ii)$ and let $B\in \mathcal B(X)$. 
We have that $\psi_n\geq0$ everywhere for any $n\in\N$, so
\begin{align*}
 \sum_{n\in\N}\int_{X}\psi_n d\mathscr S^{\infty-1}\res\Gamma(g,\Omega_h^\perp)<\infty.
\end{align*}}

Since $g,f\in W^{1,1}({B(y_{m(n)},r_{m(n)})\cap X_h^\perp})$,
taking into account Theorem \ref{bicicletta} and \eqref{brunone} we have
\begin{align*}
 \int_B [\nu_g,k]_H&d\mathscr S^{\infty-1}\res\Gamma(g,\Omega_h^\perp) \\
= & \int_{X}\sum_{n\in\N}\psi_n(x-\pi_hx)\1_B(x) [\nu_g(x),k]_H\mathscr S^{\infty-1}\res\Gamma(g,\Omega_h^\perp)(dx)\\
= & \sum_{n\in\N}\int_{X}\psi_n(x-\pi_hx)\1_B(x) [\nu_g(x),k]_H\mathscr S^{\infty-1}\res\Gamma(g,\Omega_h^\perp)(dx)\\
= & \sum_{n\in\N}\int_{X}\psi_n(x-\pi_hx)\1_B(x) d[\nu_g(x),k]_H \mathscr S^{\infty-1}\res\Gamma(g,B(y_{m(n)},r_{m(n)})\cap X_h^\perp) (dx)\\
= & \sum_{n\in\N}\int_{X}\psi_n(x-\pi_hx)\1_B(x)  [D_\gamma\1_{{\rm Epi}(g,\B(y_{m(n)},r_{m(n)})\cap X_h^\perp)},k]_H (dx)\\
= & \sum_{n\in\N}\int_{X}\psi_n(x-\pi_hx)\1_B(x)  [D_\gamma\1_{{\rm Epi}(g,\Omega_h^\perp)},k]_H (dx)\\
= & \int_X\sum_{n\in\N}\psi_n(x-\pi_hx)\1_B (x) [D_\gamma\1_{{\rm Epi}(g,\Omega_h^\perp)} ,k]_H(dx)\\
= &\int_B d[D_\gamma\1_{{\rm Epi}(g,\Omega_h^\perp)},k]_H,
\end{align*}
where $\nu_g$ has been defined in \eqref{ricola} and we can change series and integral thanks to the dominated convergence theorem. This shows that
\begin{align*}
D_\gamma\1_{{\rm Epi}(g,\Omega_h^\perp)}=\nu_g\mathscr S^{\infty-1}\res\Gamma(g,\Omega_h^\perp)=\frac{-\nabla_H g(y)+h}{\sqrt{1+|\nabla_H g(y)|_H^2}}\mathscr S^{\infty-1}\res \Gamma(g,\Omega_h^\perp).
\end{align*}
The same argument applied to $f$ gives
\begin{align*}
D_\gamma\1_{{\rm Epi}(f,\Omega_h^\perp)}=\nu_f\mathscr S^{\infty-1}\res\Gamma(f,\Omega_h^\perp)=\frac{-\nabla_H f(y)+h}{\sqrt{1+|\nabla_H f(y)|_H^2}}\mathscr S^{\infty-1}\res \Gamma(f,\Omega_h^\perp),
\end{align*}
and the thesis follows from \eqref{hugo}.
\end{proof}
}
\begin{rmk}
We cannot directly apply \eqref{fedor} to \eqref{hugo} since $f$ and $g$ do not belong to $W^{1,1}(\Omega_h^\perp,\gamma_h^\perp)$, but they belong to $W^{1,1}(B(y_n,r_n)\cap X_h^\perp,\gamma_h^\perp)$ with $n\in\N$. Hence, we don't have global summability and we have to use the partition of unity.
\end{rmk}

Since $\Omega$ is an open convex set, $\p$ is defined everywhere and $\partial \Omega=\{x\in X:\p(x)=1\}$. Moreover, it follows that $\p$ is a continuous convex function.
Our aim is to prove that $\p(x)$ is G\^ateaux differentiable $\mathscr S^{\infty-1}$-a.e. $x\in\partial \Omega$.
We recall a characterization of G\^ateaux differentiability of a continuous convex function (see \cite[Proposition 1.8]{Phe93}).
\begin{proposition}\label{aperitivo}
Let $x_0\in X$. A continuous convex function $\psi$ defined on an open set $D\ni x_0$ is G\^ateaux differentiable at $x_0$ if and only if there exists a unique linear functional $x^*\in X^*$ such that
\begin{align*}
x^*(x-x_0)\leq \psi(x)-\psi(x_0), \quad \forall x\in D.
\end{align*} 
In this case, $x^*=d\psi(x_0)$.
\end{proposition}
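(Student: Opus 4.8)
The plan is to recognize that the set of functionals $x^*\in X^*$ satisfying the inequality $x^*(x-x_0)\le\psi(x)-\psi(x_0)$ for all $x\in D$ is precisely the subdifferential $\partial\psi(x_0)$, so the statement asserts that $\psi$ is G\^ateaux differentiable at $x_0$ if and only if $\partial\psi(x_0)$ is a singleton, and that this singleton is then $d\psi(x_0)$. Existence of at least one such $x^*$ is already guaranteed by Proposition \ref{stereo}, so only the two implications of the equivalence remain to be established.

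First I would dispose of the easy direction. Suppose $\psi$ is G\^ateaux differentiable at $x_0$ with derivative $x^*:=d\psi(x_0)$. By convexity the difference quotient $t\mapsto t^{-1}(\psi(x_0+th)-\psi(x_0))$ is non-decreasing for $t>0$, so the one-sided directional derivative $\psi'(x_0;h):=\lim_{t\to0^+}t^{-1}(\psi(x_0+th)-\psi(x_0))$ exists and equals $x^*(h)$; taking $t=1$ gives $x^*(h)=\psi'(x_0;h)\le\psi(x_0+h)-\psi(x_0)$, so $x^*\in\partial\psi(x_0)$. If $y^*$ is any element of $\partial\psi(x_0)$, testing the defining inequality on $x=x_0+th$ and letting $t\to0^+$ yields $y^*(h)\le\psi'(x_0;h)=x^*(h)$ for every $h$; replacing $h$ by $-h$ gives the reverse inequality, whence $y^*=x^*$ and the subgradient is unique.

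The converse is the main obstacle. Here I would exploit that $p(h):=\psi'(x_0;h)$ is a sublinear functional: positive homogeneity is immediate and subadditivity follows from convexity of $\psi$, while local Lipschitz continuity of $\psi$ near $x_0$ (a continuous convex function on an open set is locally Lipschitz, as used already in Lemma \ref{james}) makes $p$ finite and continuous. The key identification is that $x^*\in\partial\psi(x_0)$ if and only if $x^*(h)\le p(h)$ for all $h$: one direction is the computation above, the other follows by taking $h=x-x_0$ and using $p(x-x_0)\le\psi(x)-\psi(x_0)$. Applying the Hahn--Banach theorem to $p$, for every fixed direction $h_0$ there is a linear functional dominated by $p$ with equality at $h_0$; continuity of $p$ forces this functional to be bounded, hence to lie in $X^*$, so it is an element of $\partial\psi(x_0)$ attaining $x^*(h_0)=p(h_0)$.

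Finally, assuming $\partial\psi(x_0)=\{x^*\}$ is a singleton, the supporting functional produced above must coincide with $x^*$ in every direction, so $x^*(h_0)=p(h_0)=\psi'(x_0;h_0)$ and likewise $\psi'(x_0;-h_0)=x^*(-h_0)=-x^*(h_0)$. Rewriting the second limit with $s=-t\to0^-$ shows that the left and right difference quotients in the direction $h_0$ both converge to $x^*(h_0)$, so the two-sided limit exists and the G\^ateaux derivative in direction $h_0$ equals $x^*(h_0)$. Since this holds for every $h_0$ and $x^*$ is already linear and continuous, $\psi$ is G\^ateaux differentiable at $x_0$ with $d\psi(x_0)=x^*$. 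The delicate point throughout is the Hahn--Banach step furnishing, for each direction, a \emph{continuous} subgradient that saturates the directional derivative; reducing the singleton hypothesis to the linearity of $p$ hinges entirely on this construction.
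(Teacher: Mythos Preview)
Your argument is correct and is essentially the standard proof of this classical fact in convex analysis. Note, however, that the paper does not supply its own proof of this proposition: it is simply quoted from \cite[Proposition~1.8]{Phe93}, so there is no ``paper's proof'' to compare against. What you have written is, up to presentation, the argument one finds in Phelps: identify the subdifferential with the set of linear functionals dominated by the sublinear directional derivative $p(h)=\psi'(x_0;h)$, use Hahn--Banach to produce, for each direction, a subgradient saturating $p$, and conclude from the singleton hypothesis that $p$ is linear and equals $x^*$.
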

In particular, by Lemma \ref{james}
for any $y\in B(\tilde y,r_{\tilde y})$, for $\mathscr S^{\infty-1}$-a.e. $\tilde y\in \Omega_h^\perp$ and suitable $r_{\tilde y}>0$ we have
\begin{align*}
- D_Gf(\tilde y)(y-\tilde y) \leq -f(y)+f(\tilde y), \quad D_Gg(\tilde y)(y-\tilde y) \leq g(y)-g(\tilde y), 
\end{align*}
where $D_Gf$ and $D_Gg$ is the G\^ateaux differential of $f$ and $g$, respectively.

We introduce the following notation. Let $y^*\in (X_h^\perp)^*$, let $h\in QX^*$ and let $h^*\in X^*$ such that $Qh^*=h$.
Then, $x^*:=y^*\otimes h^*\in X^*$ denotes the element of $X^*$ such that $x^*(x)=y^*(y)+t$ for any $x=y+th$, with $y\in X_h^\perp$ and $t\in\R$.

Now we have all the ingredients to prove the G\^ateaux differentiability of $\p$.

\begin{theorem}\label{grasso}
In our setting, let $x\in\Gamma(f,\Omega_h^\perp)$ such that $f$ is G\^ateaux differentiable at $y$, where $x=y+f(y)h$. Then, it holds that
\begin{align}
\label{cena}
D_G\p(x)=\frac{-D_Gf(y)\otimes h^*}{(-D_Gf(y)\otimes h^*)(x)}.
\end{align}

Analogously, if $x\in\Gamma(g,\Omega_h^\perp)$ and $g$ is G\^ateaux differentiable at $y$, where $x=y+g(y)h$, then we get
\begin{align}
\label{cenetta}
D_G\p(x)=\frac{D_Gg(y)\otimes -h^*}{(D_Gg(y)\otimes-h^*)(x)}.
\end{align}
In particular, $\p$ is G\^ateaux differentiable and $H$-differentiable for $\mathscr S^{\infty-1}$-a.e. $x\in \partial \Omega$, and
\begin{align}
\label{tazza}
\nabla_H\p(x)=
\begin{cases}
\displaystyle
\frac{-\nabla_Hf(y)\otimes h}{( -D_Gf(y)\otimes h^*)(x)},  & x=y+f(y)h, \ {\textrm{ $f$ G\^ateaux diff. at $y$}}, \vspace{2mm}\\
\displaystyle 
\frac{\nabla_Hg(y)\otimes - h}{( D_Gg(y)\otimes- h^*)(x)}, & x=y+g(y)h, \ {\textrm{ $g$ G\^ateaux diff. at $y$}}.
\end{cases}
\end{align}
\end{theorem}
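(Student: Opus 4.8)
The plan is to work in case~4 of the classification above (the other three cases are either vacuous for the graph in question or handled by the same argument), and to determine the subdifferential $\partial\p(x)$ at a point $x=y+f(y)h\in\Gamma(f,\Omega_h^\perp)$ with $f$ G\^ateaux differentiable at $y$, showing it is a singleton and identifying its unique element. Set $\ell:=-D_Gf(y)\otimes h^*\in X^*$ and $c:=\ell(x)=f(y)-D_Gf(y)(y)$. The first routine point is that $c>0$: since $0\in\Omega$ we have $f(0)>0$, and since $f$ is concave on the convex set $\Omega_h^\perp$ and differentiable at $y$, the tangent inequality $f(y')\le f(y)+D_Gf(y)(y'-y)$ holds for every $y'\in\Omega_h^\perp$; taking $y'=0$ gives $c=f(y)-D_Gf(y)(y)\ge f(0)>0$.

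Next I would check that $\ell/c\in\partial\p(x)$ by means of Lemma~\ref{pasta}: one has $(\ell/c)(x)=1=\p(x)$ trivially, so it remains to see $\ell/c\in C(\p)$, i.e. $(\ell/c)(x')\le\p(x')$ for all $x'\in X$. By positive $1$-homogeneity of both $\p$ and $\ell/c$ this reduces to showing that $(\ell/c)(x')=1$ forces $x'\notin\Omega$ (the case $(\ell/c)(x')\le0$ being trivial since $\p\ge0$). But if $x'=y'+t'h\in\Omega$ then, in case~4, $y'\in\Omega_h^\perp$ and $t'<f(y')$, whence by the tangent inequality $t'-D_Gf(y)(y')<f(y)-D_Gf(y)(y)=c$, that is $(\ell/c)(x')<1$; this proves the claim.

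The core of the argument is the uniqueness of the subgradient, and this is exactly where the G\^ateaux differentiability of $f$ is used. Let $x_1^*\in\partial\p(x)$; by Lemma~\ref{pasta} we have $x_1^*\in C(\p)$ and $x_1^*(x)=1$. For $v\in X_h^\perp$ and $s$ small, the point $x_s:=(y+sv)+f(y+sv)h$ lies on $\Gamma(f,\Omega_h^\perp)\subset\partial\Omega$ by Lemma~\ref{cementine}, so $\p(x_s)=1$ and hence $x_1^*(x_s-x)\le 0$. Writing $\beta:=x_1^*(h)$ and expanding, $x_1^*(x_s-x)=s\,x_1^*(v)+\beta\,(f(y+sv)-f(y))$; dividing by $s$ and letting $s\to0^+$ and $s\to0^-$ and using the two-sided limit $(f(y+sv)-f(y))/s\to D_Gf(y)(v)$ gives $x_1^*(v)+\beta\,D_Gf(y)(v)=0$ for every $v\in X_h^\perp$. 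Thus $x_1^*|_{X_h^\perp}=-\beta\,D_Gf(y)$, so $x_1^*=\beta\,\ell$, and $x_1^*(x)=1$ forces $\beta=1/c$; hence $x_1^*=\ell/c$. Therefore $\partial\p(x)=\{\ell/c\}$, and Proposition~\ref{aperitivo} yields that $\p$ is G\^ateaux differentiable at $x$ with $D_G\p(x)=\ell/c$, which is \eqref{cena}. The formula \eqref{cenetta} for $x\in\Gamma(g,\Omega_h^\perp)$ is obtained in the same way, now with $m:=D_Gg(y)\otimes(-h^*)$ acting by $y'+t'h\mapsto D_Gg(y)(y')-t'$, using that $g$ is convex (so $g(y')\ge g(y)+D_Gg(y)(y'-y)$, which gives $m(x)>0$ via $g(0)<0$) and that $\Omega$ lies above the graph of $g$ (so $t'>g(y')$ for $x'=y'+t'h\in\Omega$).

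Finally, for the ``in particular'' part I would combine three facts: by Lemma~\ref{cementine}, $\partial\Omega=\Gamma(f,\Omega_h^\perp)\cup\Gamma(g,\Omega_h^\perp)\cup N$ with $\mathscr S^{\infty-1}(N)=0$; by Lemma~\ref{james} and Remark~\ref{huxley}, $f$ and $g$ are G\^ateaux differentiable $\gamma_h^\perp$-a.e. on $\Omega_h^\perp$; and by the area formula \eqref{prozio} the image under $y'\mapsto y'+f(y')h$ of a $\gamma_h^\perp$-null subset of $\Omega_h^\perp$ is $\mathscr S^{\infty-1}$-null (and likewise for $g$). Together these show that $\p$ is G\^ateaux differentiable at $\mathscr S^{\infty-1}$-a.e. $x\in\partial\Omega$. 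Since $D_G\p(x)\in X^*$, the $H$-derivative then exists and $\nabla_H\p(x)$ is the Cameron--Martin Riesz representative of $D_G\p(x)$; using $\nabla_Hf(y)=R_\gamma D_Gf(y)$ (Remark~\ref{huxley}$(i)$) and $|h|_H=1$ one checks that the Riesz representative of $\ell|_H$ is $-\nabla_Hf(y)+h$, so that $\nabla_H\p(x)=(-\nabla_Hf(y)+h)/c$, which is \eqref{tazza}; the computation on $\Gamma(g,\Omega_h^\perp)$ is analogous. The only genuinely delicate step is the uniqueness of the subgradient: there the differentiability of $f$ at $y$ must be exploited in both the $+$ and $-$ directions along $X_h^\perp$, and it is what collapses $\partial\p(x)$ to a single point; the positivity of the denominator $c$ and the bookkeeping in the almost-everywhere statement are comparatively routine.
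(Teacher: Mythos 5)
Your proof is correct and follows essentially the same route as the paper: both identify $\partial \p(x)$ as the singleton $\left\{(-D_Gf(y)\otimes h^*)/c\right\}$ using Lemma \ref{pasta} together with the fact that $\Gamma(f,\Omega_h^\perp)\subset\{\p=1\}$ and the G\^ateaux differentiability of $f$ at $y$, then conclude via Proposition \ref{aperitivo}, and both settle the almost-everywhere statement by combining Lemma \ref{cementine} with the area formula. The only cosmetic differences are that you verify membership of the candidate functional in $\partial\p(x)$ directly (where the paper instead cites Proposition \ref{stereo} for nonemptiness and shows every subgradient equals the candidate), and you obtain the positivity of $x^*(h)$ as a byproduct of $c>0$ rather than through the paper's separate contradiction argument.
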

\begin{proof}
We fix $x_0\in\Gamma(f,\Omega_h^\perp)$ such that $f$ is G\^ateaux differentiable at $y_0$, with $x_0:=y_0+f(y_0)h$ and $y_0\in\Omega_h^\perp$. Since $\p$ is continuous, from Proposition \ref{stereo} we know that $\partial \p(x_0)$ is nonempty. We claim that any element of $\partial \p(x_0)$ equals \eqref{cena}.
If the claim is true, by Proposition \ref{aperitivo} it follows that $\p$ is G\^ateaux differentiable at $x_0$ and $D_G\p(x_0)$ satisfies \eqref{cena}. Hence, it remains to prove the claim.

Let $x^*\in \partial \p(x_0)$. Lemma \ref{pasta} implies that $x^*\in C(\p)$, i.e., $x^*(x)\leq \p(x)$ for any $x\in X,$ and $x^*(x) =\p(x_0)=1$. Since $y_0\in\Omega_h^\perp$ and $\Omega^\perp_h$ is an open set, there exists $r>0$ such that,
for any $y\in B(y_0,r)\subset {\Omega_h^\perp}$, the element $x:=y+f(y)h\in\Gamma(f,\Omega_h^\perp)\subset\partial \Omega$. Therefore, $x^*(x)\leq \p(x)=1$ and
\begin{align*}
0
\geq &  x^*(x) -x^*(x_0)
=x^*( x-x_0) 
= x^*(y+f(y)h-y_0-f(y_0)h)
= x^*(y-y_0)+x^*(h) (f(y)-f(y_0),
\end{align*}
which implies that
\begin{align} \label{caramelle}
x^*(y-y_0) \leq x^*(h)(f(y_0)-f(y)).
\end{align}
Let us show that $x^*(h)>0$. 
Indeed, if by contradiction we assume that 
$x^*(h)\leq0$, then for any $t<0$ we have
\begin{align*}
\p(x_0+th)\geq x^*(x_0+th)=x^*(x_0)+t x^*(h)\geq 1.
\end{align*}
This means that $x_0+th=y_0+(t+f(y_0))h\notin \Omega $ for any $t<0$. This contradicts the fact that $y_0+ch\in \Omega$ for any $c\in (g(y_0),f(y_0))$, since $y_0\in\Omega_h^\perp$.
We have therefore proved that $x^*(h) >0$. Dividing both sides of \eqref{caramelle} by $x^*(h)$ we get 
\begin{align*}
z^*(y-y_0)\leq (-f)(y)-(-f)(y_0), \quad \forall y\in B(y_0,r),
\end{align*}
where $z^*:=( x^*(h))^{-1}x^*$. Since $(-f)$ is G\^ateaux differentiable at $y_0$, Proposition \ref{aperitivo} gives $z^*=D_G(-f)(y_0)=-D_Gf(y_0)$ on $X_h^\perp$. Now we compute $ x^*(h)$. From $x^*(x_0)=1$, we get
\begin{align*}
1=x^*(x_0)
=x^*(y_0)+f(y_0)x^*(h)=-D_Gf(y_0)(y_0)x^*(h)+f(y_0)x^*(h). 
\end{align*}
Hence,
\begin{align}
\label{alfred}
x^*(h)=\frac1{-D_Gf(y_0)(y_0)+f(y_0)}
 =\frac1{(-D_Gf(y_0)\otimes h^*)(x_0)}.
\end{align}
We are almost done. Indeed, for any $x\in X$, we consider the decomposition $x=y+th$ with $y\in X_h^\perp$ and $t\in \R$. Above computations reveal
\begin{align*}
x^*(x)= & x^*(y)+t x^*(h)
= \frac{ -D_Gf(y_0)(y)}{( -D_Gf(y_0)\otimes h^*)(x_0)}+\frac{t}{( -D_Gf(y_0)\otimes h^*)(x_0)} \\
= & \frac{ -D_Gf(y_0)(y) +t}{( -D_Gf(y_0)\otimes h^*)(x_0)} 
= \frac{ (-D_Gf(y_0)\otimes h^*)(x)}{( -D_Gf(y_0)\otimes h^*)(x_0)}.
\end{align*} 
The claim is therefore proved. The same arguments applied to $g$ give \eqref{cenetta}.

We have proved that $\gamma_h^\perp$a.e. $y\in\Omega_h^\perp$ the function $\p$ is G\^ateaux differentiable at $x_f=y+f(y)h$ and $x_g=y+g(y)h$. Equivalently, there exists a $\gamma_h^\perp$-negligible set $V\subset\Omega_h^\perp$ such that $\p$ is G\^ateaux differentiable on $\Gamma(f,\Omega_h^\perp\setminus V)\cup \Gamma(g,\Omega_h^\perp\setminus V)$. Moreover,
\begin{align*}
\int_{\Gamma(f,\Omega_h^\perp)}\1_{\Gamma(f,V)}d\mathscr S^{\infty-1}
= \int_{\Omega_h^\perp}\1_{\Gamma(f,V)}(y+f(y)h)G_1(f(y))\sqrt{1+|\nabla_Hf(y)|_H^2}\gamma_h^\perp(dy)=0,
\end{align*}
since $\1_{\Gamma(f,V)}(y+f(y)h)=\1_V(y)$ and $\gamma_h^\perp(V)=0$. This gives $\mathscr S^{\infty-1}(\Gamma(f,V))=0$ and, analogously, we get $\mathscr S^{\infty-1}(\Gamma(g,V))=0$. From Lemma \ref{cementine} we infer that $\p(x)$ is G\^ateaux differentiable $\mathscr S^{\infty-1}$-a.e. $x\in\partial \Omega$.
The last part of the statement follows because, as recalled in Remark \ref{huxley}$(i)$, $\nabla_H=R\gamma D_G$.
\end{proof}

Now we are ready to prove Theorem \ref{triciclo}.

\begin{proof}[of Theorem \ref{triciclo}]
By the last part of Theorem \ref{grasso}, $\nabla_H\p$ is defined and non-zero $\mathscr {S}^\infty-1$-almost everywhere on $\partial\Omega$. 

As a consequence of \eqref{alfred} we deduce that $(-D_Gf(y_0)\otimes h^*)(x_0)>0$ for any $y_0\in\Omega_{h}^\perp$ such that $x_0=y_0+f(y_0)h\in\Gamma(f,\Omega_h^\perp)$ and $f$ is differentiable at $y_0$, and $(D_Gg(y_0)\otimes (- h^*))(x_0)>0$ for any $y_0\in\Omega_{h}^\perp$ such that such that $x_0=y_0+g(y_0)h\in\Gamma(g,\Omega_h^\perp)$ and $g$ is differentiable at $y_0$.
Hence, \eqref{tazza} gives
\begin{align}\label{ritardo}
\frac{\nabla_H\p(x)}{|\nabla_H\p(x)|_H}=\nu_f(x),
\end{align}
if $x\in \Gamma(f,\Omega_h^\perp)$ and $f$ is differentiable at $y$, with $x=y+f(y)h$, and
\begin{align}\label{investimento}
\frac{\nabla_H\p(x)}{|\nabla_H\p(x)|_H}=-\nu_g(x),
\end{align}
if $x\in \Gamma(g,\Omega_h^\perp)$ and $g$ is differentiable at $y$, with $x=y+g(y)h$. 
Let $k\in H$ and let $\psi\in{\rm Lip}_b(X)$.
From \eqref{aldous}, \eqref{ritardo} and \eqref{investimento} we get
\begin{align*}
\int_\Omega\partial_k^*\psi d\gamma
= & -\int_X\psi d[ D_\gamma\1_\Omega,k]_H
= \int_{\Gamma(f,\Omega_h^\perp)}\psi [ \nu_f,k]_H d \mathscr S^{\infty-1}-\int_{\Gamma(g,\Omega_h^\perp)}\psi [  \nu_g,k]_H d \mathscr S^{\infty-1} \\
= & \int_{\Gamma(f,\Omega_h^\perp)}\psi\frac{\partial_k\p}{|\nabla_H\p|}d \mathscr S^{\infty-1}+\int_{\Gamma(g,\Omega_h^\perp)}\psi \frac{\partial_k\p}{|\nabla_H\p|}d \mathscr S^{\infty-1}
=  \int_{\partial\Omega}\psi\frac{\partial_k\p}{|\nabla_H\p|}d \mathscr S^{\infty-1}.
\end{align*}
\end{proof}

\bibliographystyle{plain}
\nocite{*} 
\bibliography{biblio_minkowski.bib}
\markboth{\textsc{References}}{\textsc{References}}
\end{document}